\documentclass[11pt]{article}

\usepackage{latexsym}
\usepackage{amssymb}

\newtheorem{theorem}{Theorem}
\newtheorem{proposition}{Proposition}
\newtheorem{corollary}{Corollary}

\newtheorem{lemma}{Lemma}

\begin{document}
{

\begin{center}
{\Large\bf
On the orthogonality of solutions for higher-order non-Hermitian difference equations}
\end{center}

\begin{center}
{\bf Sergey M. Zagorodnyuk}
\end{center}

\noindent
\textbf{Abstract.}
In this paper we study higher-order difference equations which can be written as follows:
$$ \mathbf{J} (y_0,y_1,...)^T = \lambda^N (y_0,y_1,...)^T, $$
where $\mathbf{J}$ is a $(2N+1)$-diagonal bounded banded matrix 
($\mathbf{J}=(g_{m,n})_{m,n=0}^\infty$, $| g_{m,n} |< C$, $C>0$; and $g_{k,l}=0$ if $|k-l|>N$), 
$y_j$s are unknowns,
$\lambda$ is a complex parameter, $N\in\mathbb{N}$. It is assumed that all $g_{k,k+N}$ and $g_{l-N,l}$ are nonzero.
Two special cases are considered:

\noindent
\textit{Case A}: The matrix $\mathbf{J}$ is complex symmetric, i.e. $\mathbf{J} = \mathbf{J}^T$.

\noindent
\textit{Case B}: The matrix $\mathbf{J}$ is such that $g_{k,k+N}=1$, $k=0,1,2,...$. 
Notice that this condition can be attained by changing $y_j$s by their multiples.

In both cases there exists a \textit{positive} matrix measure $M$ on a circle in the complex plane such that polynomial solutions
satisfy some orthogonality relations. Namely, in case~A this is related to
a $J$-orthogonality in the Hilbert space $L^2(M)$ ($J$ is a complex conjugation).
In case~B we have a left $J$-orthogonality in $L^2(M)$. As a tool, a related matrix moment problem is studied.
A complex rank-one perturbation of a free Jacobi matrix is discussed.

\noindent
\textbf{Key words}: complex banded matrix, matrix moment problem, orthogonal polynomials, positive matrix-valued measure.

\noindent
\textbf{MSC 2020}: 42C05.

\section{Introduction.}

The theory of real Jacobi matrices and the theory of block Hernitian Jacobi matrices 
are now  classical subjects with large applications in mathematics, physics and other 
sciences~\cite{cit_2000_Akhiezer_book_1965},\cite{cit_7000_Berezanskii}. 
One of the central results in these theories, Favard's theorem, provides orthogonality relations for
polynomial solutions of the corresponding difference equations. 
This provides an important link to the theory of (scalar and matrix) orthogonal polynomials on the real line.
On the other hand, complex Jacobi matrices or J-matrices appeared much later in Wall's book~\cite{cit_2500_Wall}.
The latter theory developed much more slowly. Maybe, the reason was that the orthogonality of polynomial solutions was given with respect to
a linear functional. This kind of orthogonality leads to the so called formal orthogonal polynomials (FOPs). 
A nice overview on this theory was done by Beckermann in 2001~\cite{cit_2100_Beckermann_2001}.
Complex analogs of block Hermitian Jacobi matrices, called (complex) banded matrices, became popular just in this century.
Formal orthogonality relations for complex symmetric $(2N+1)$-diagonal matrices were given in~\cite{cit_2500_Z_Serdica}.
Formal left-orthogonality relations in a more general case were studied 
in~\cite{cit_7100_Branquinho__Marcellan__Mendes},\cite{cit_2500_Serdica_2011}.

Our main aim here is to turn the formal orthogonality with respect to a linear functional to a $J$-orthogonality
of vectors constructed from polynomials
in a Hilbert space $L^2(M)$ of square summable vector-valued functions with respect to a positive $(N\times N)$ matrix measure $M$.
This measure will be supported on a circle in the complex plane, centered at the origin.

Let $J$ be an operator of the complex conjugation $Jf=\overline{f}$ of a vector-valued function $f$ in $L^2(M)$, with a scalar product 
denoted by $(\cdot,\cdot)$. 
Two vectors $f,g\in H$ are said to be \textit{$J$-orthogonal} if $(f,Jg) = 0$.
Observe that in the matrix case $J$ is not necessarily a conjugation as the latter term appears for operators in a Hilbert space.
Namely, the property $(Jf,Jg) = (g,f)$ is not guaranteed, when $N\geq 2$.

Let $N$ be a fixed positive integer.
Let $\mathbf{J}$ be a $(2N+1)$-diagonal bounded banded matrix. Namely, we suppose that
\begin{equation}
\label{f1_5}
 \mathbf{J} = ( g_{m,n} )_{m,n=0}^\infty,\qquad g_{m,n}\in\mathbb{C},
\end{equation}
\begin{equation}
\label{f1_7}
| g_{m,n} |< C,\qquad C>0, 
\end{equation}
\begin{equation}
\label{f1_9}
g_{k,l}=0, \mbox{ if }|k-l|>N.
\end{equation}
Moreover, we assume that
\begin{equation}
\label{f1_11}
g_{k,k+N}\not= 0,\quad  g_{l-N,l}\not=0,\qquad k\in\mathbb{Z}_+,\ l=N,N+1,....
\end{equation}

We shall study the following higher-order difference equation:
\begin{equation}
\label{f1_15}
\mathbf{J} (y_0,y_1,...)^T = \lambda^N (y_0,y_1,...)^T, 
\end{equation}
where $y_j$s are unknowns, $\lambda$ is a complex parameter.
Two special cases will be considered:

\noindent
\textit{Case A}: The matrix $\mathbf{J}$ is complex symmetric, i.e. $\mathbf{J} = \mathbf{J}^T$.

\noindent
\textit{Case B}: The matrix $\mathbf{J}$ is such that $g_{k,k+N}=1$, $k=0,1,2,...$.

Observe that case~B can be attained by changing $y_j$s by their multiples.

Let us briefly describe the content of the present paper. In Section~2 we shall prove the existence of the above-mentioned positive matrix
measure $M$. Subsection~2.1 is devoted to a special matrix moment problem in the complex plane. We provide sufficient
conditions of the solvability both in the full and in the truncated cases.
In subsection~2.2 case~A is considered and $J$-orthogonality relations in the $L^2(M)$ space are obtained.
In subsection~2.3 we study case~B and we prove left $J$-orthogonality relations in the $L^2(M)$ space.
In Section~3 we discuss a complex rank-one perturbation of a free Jacobi matrix.
We shall construct a complex and a nonnegative orthogonality weights for this case. 
Notice that Jacobi matrices of such types were studied in~\cite{cit_2100_Arlinskii_2006} 
and their relations with model dissipative operators were investigated. Here we shall see a connection with
some normal operators.

\noindent
{\bf Notations. }
As usual, we denote by $\mathbb{R}, \mathbb{C}, \mathbb{N}, \mathbb{Z}, \mathbb{Z}_+$
the sets of real numbers, complex numbers, positive integers, integers and non-negative integers,
respectively. We denote $\mathbb{T} = \{ z\in\mathbb{C}\ \arrowvert \ |z|=1 \}$,
$\mathbb{D}_r = \{ z\in\mathbb{C}\ \arrowvert \ |z|\leq r \}$, for $r>0$.
By $\mathbb{Z}_{k,l}$ we mean all integers $r$, which satisfy the following inequality:
$k\leq r\leq l$.
By $\mathbb{P}$ we mean a set of all complex polynomials.
By $\mathfrak{B}(M)$ we denote the set of all Borel subsets of a set $M\subseteq\mathbb{C}$.

Let $m,n\in\mathbb{N}$.
By $\mathbb{C}_{m\times n}$ we denote a set of all complex matrices of size $(m\times n)$.
By $\mathbb{C}_{n\times n}^\geq$ we mean a subset of $\mathbb{C}_{n\times n}$ consisting of
Hermitian non-negative matrices.
If $A\in \mathbb{C}_{m\times n}$, $A = (a_{k,l})_{1\leq k\leq m,\ 1\leq l\leq n}$, then $A^T$ means the transpose of $A$, and
$\overline{A} = ( \overline{ a_{k,l} } )_{1\leq k\leq m,\ 1\leq l\leq n}$, $A^* = (\overline{A})^T$.
We denote $E_n := (\delta_{k,l})_{1\leq k\leq n,\ 1\leq l\leq n}$, where $\delta_{k,l}$ is Kronecker's delta.

Let $N\in\mathbb{N}$.
For a $\mathbb{C}_{N\times N}^\geq$-valued measure $M(\delta)$ on $\mathfrak{B}(\mathbb{C})$ we denote by $L^2_M$ the set of all
(classes of equivalence of) $\mathfrak{B}(\mathbb{C})$-measurable 
$\mathbb{C}_{1\times N}$-valued functions $f(z) = (f_0(z),...,f_{N-1}(z))$, $z\in\mathbb{C}$, such that
$\int_\mathbb{C} f(z) M'_\tau f^*(z) d\tau < +\infty$ (see~\cite{cit_2510___Rosenberg}). 
Here $M'_\tau$ means the Radon-Nikodym derivative of $M$ with respect to the trace
measure $\tau = \tau_M$.
The class of the equivalence containing a function $f$ will be denoted by $[f]$.

By $l^2$ we denote the usual space of square-summable complex row vectors $\vec u = (u_k)_{k=0}^\infty$, $u_k\in\mathbb{C}$,
and $l^2_{\mathrm{fin}}$ means the subset of all finitely supported vectors from $l^2$. Moreover $\vec e_k$ means a vector
from $l^2$ having $1$ in $k$'s place and zeros in other places ($k\in\mathbb{Z}_+$).

If $H$ is a Hilbert space then $(\cdot,\cdot)_H$ and $\| \cdot \|_H$ mean
the scalar product and the norm in $H$, respectively. 
Indices may be omitted in obvious cases.
For a linear operator $A$ in $H$, we denote by $D(A)$
its  domain, by $R(A)$ its range, and $A^*$ means the adjoint operator
if it exists. If $A$ is invertible then $A^{-1}$ means its
inverse. $\overline{A}$ means the closure of the operator, if the
operator is closable. If $A$ is bounded then $\| A \|$ denotes its
norm.
By $E_H$ we denote the identity operator in $H$, i.e. $E_H x = x$,
$x\in H$. In obvious cases we may omit the index $H$. If $H_1$ is a subspace of $H$, then $P_{H_1} =
P_{H_1}^{H}$ denotes the orthogonal projection of $H$ onto $H_1$.
By $A|_{H_1}$ we mean the restriction of $A$ to the subspace $H_1$.

By $F(a,b;c;z)$ we denote the hypergeometric function:
$$ F(a,b;c;z) = \sum_{n=0}^\infty \frac{ (a)_n (b)_n }{ (c)_n n!} z^n,\qquad |z|<1, $$
with complex parameters $a,b,c$; $c\not= 0,-1,-2,...$. By $(a)_n$ one means Pochhammer's symbol (or the shifted factorial).

\section{The existence of a positive matrix measure for the orthogonality relations}

\subsection{A matrix moment problem in the complex plane}

In this subsection we shall study the following moment problem:
find a $\mathbb{C}_{N\times N}^\geq$-valued measure $M(\delta)$, $\delta\in\mathfrak{B}(\mathbb{C})$, satisfying the following conditions:
\begin{equation}
\label{f2_5}
\int_\mathbb{C} z^k dM(z) = S_k,\qquad k=0,1,2,...,
\end{equation}
where $S_k\in \mathbb{C}_{N\times N}$ are prescribed matrices, $N\in\mathbb{N}$. The matrices $S_k$ are called \textit{(matrix) moments}.

A scalar version of this moment problem ($N=1$) was studied in~\cite{cit_2700_Surveys_2023}.
The following theorem provides sufficient conditions for the solvability of the moment problem~(\ref{f2_5}).

\begin{theorem}
\label{t2_1} 
 Let the moment problem~(\ref{f2_5}) be given with $S_0 = E_N$, $S_k =
 (s_{k;l,j})_{0\leq l,j\leq N-1}
 \in \mathbb{C}_{N\times N}$, $k\in\mathbb{N}$.
Suppose that for some positive numbers $C$, $R$, the following relation holds:
\begin{equation}
\label{f2_7}
| s_{k;l,j} | \leq C R^k,\qquad l,j\in\mathbb{Z}_{0,N-1},\ k\in\mathbb{N}.
\end{equation}
Then the moment problem~(\ref{f2_5}) has a solution $M(\delta)$, supported on a circle centered at the origin. 
Moreover, the following representation holds:
\begin{equation}
\label{f2_8}
\oint_{|z|=1} (rz)^k W(z) d\lambda = S_k,\qquad k=0,1,2,...,
\end{equation}
where $r>0$, $\lambda$ is the Lebesgue measure on $\mathfrak{B}(\mathbb{T})$,
and $W(z)$ is a $\mathbb{C}_{N\times N}^\geq$-valued $\mathfrak{B}(\mathbb{T})$-measurable bounded function
(i.e. each entry of $W(z)$ is $\mathfrak{B}(\mathbb{T})$-measurable and bounded).

\end{theorem}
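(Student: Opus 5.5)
We construct $W$ explicitly as a trigonometric series on $\mathbb{T}$ with a large constant term. Only the moments $S_k$ with $k\geq 0$ are prescribed, so the negative Fourier coefficients are ours to choose. Choose $r>R$, for instance $r=2R$, and set
$$ W(z) = \frac{1}{2\pi}\Big( E_N + \sum_{k=1}^\infty r^{-k}\big( S_k \overline{z}^{\,k} + S_k^* z^k \big) \Big),\qquad z\in\mathbb{T}, $$
where the normalization of $\lambda$ is taken as arc length, total mass $2\pi$. If $\lambda$ is normalized to be a probability measure, drop the factor $\frac{1}{2\pi}$.

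By (\ref{f2_7}), each entry of $r^{-k}S_k$ is bounded by $C(R/r)^k$. The series therefore converges absolutely and uniformly on $\mathbb{T}$, and $W$ is continuous, hence bounded and Borel measurable. The bracket is Hermitian, since the $k$-th term is $A_k+A_k^*$ with $A_k=r^{-k}S_k\overline{z}^{\,k}$.

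The moments follow by termwise integration, which uniform convergence justifies. For $z=e^{i\theta}$ we have $\oint z^m\overline{z}^{\,n}d\lambda = 2\pi\delta_{m,n}$. Then $\oint (rz)^k W(z)\,d\lambda$ picks out only the $\overline{z}^{\,k}$ term, which gives $r^k r^{-k}S_k=S_k$ for $k\ge1$. For $k=0$ it gives $E_N=S_0$. The terms $S_j^*z^j$ with $j\ge1$ never contribute, because $z^{k+j}$ integrates to zero.

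Positivity is the main obstacle, and the free constant term is what handles it. The operator norm of $A_k+A_k^*$ is at most $2\|r^{-k}S_k\|\le 2NC(R/r)^k$, using the bound $\|A\|\le N\max|a_{l,j}|$. So the bracket is at least
$$ \Big(1-2NC\sum_{k\ge1}(R/r)^k\Big)E_N = \Big(1-\frac{2NCR}{r-R}\Big)E_N. $$
This is nonnegative once $r\ge R(1+2NC)$. We therefore fix $r:=R(1+2NC)$ instead of $2R$ from the start. Convergence still holds because $r>R$.

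With this $r$, $W(z)\in\mathbb{C}^{\geq}_{N\times N}$ for every $z\in\mathbb{T}$, and (\ref{f2_8}) holds. Finally, define $M(\delta)=\int_{\{z\in\mathbb{T}:\, rz\in\delta\}} W(z)\,d\lambda$ for $\delta\in\mathfrak{B}(\mathbb{C})$. This is the push-forward of $W\,d\lambda$ under $z\mapsto rz$. It is a $\mathbb{C}^{\geq}_{N\times N}$-valued measure supported on the circle $|z|=r$, and by (\ref{f2_8}) it solves (\ref{f2_5}).
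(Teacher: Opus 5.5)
Your proof is correct, and it takes a genuinely different and much more elementary route than the paper.

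\textbf{The paper's route.} The paper encodes the moments in an operator $A$ on $l^2$. It first works under the auxiliary assumption (\ref{f2_9}) and then treats the general case by the rescaling $s_{k;l,j}\mapsto s_{k;l,j}/\xi^k$. It normalizes $A$ to a strict contraction $T$ and takes a unitary dilation $\widehat U$ by Sz.-Nagy's theorem. It obtains $M$ from the spectral measure of $r\widehat U$ with $r=\xi\widehat\rho$. Finally it gets $W=\widehat M'_\tau\tau'_\lambda$ from the absolute continuity of the spectral measure of a dilation of a c.n.u.\ contraction.

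\textbf{Your route.} You choose the negative Fourier coefficients yourself as $r^{-k}S_k^*$ and prove positivity directly by letting the constant term $E_N$ dominate. This buys three things:
\begin{itemize}
\item It needs no dilation or spectral theory.
\item It gives an explicit admissible radius: every $r\geq R(1+2NC)$ works. This also makes transparent the remark used in Section~3 that the radius may be taken arbitrarily large.
\item It yields a continuous $W$, so the boundedness of $W$ claimed in the statement is actually established. In the paper, $W$ appears only as a Radon--Nikodym derivative, and its boundedness is not argued.
\end{itemize}

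\textbf{The two constructions coincide.} Since $P_H\widehat U^{-k}|_H=(T^*)^k$, the moments $\oint z^k\,d\widehat M$ of the paper's measure are $S_k/r^k$ for $k\geq 0$ and $S_{|k|}^*/r^{|k|}$ for $k<0$. These are absolutely summable because $\|T\|<1$. Hence the paper's density agrees $\lambda$-a.e.\ with your series at $r=\xi\widehat\rho$. The dilation theorem therefore acts as an abstract certificate of exactly the positivity you verify by hand. What the paper's approach offers beyond yours is an operator model realizing the moments as $((r\widehat U)^k g_j,g_l)$, in the spirit of the scalar method the author adapts, rather than a stronger conclusion.
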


\noindent
\textbf{Proof.} For the first assertion of the theorem we shall adapt to the matrix case 
the idea of a proof of Theorem~2 in~\cite{cit_2700_Surveys_2023}.
We shall construct a contractive operator in $l^2$, and its unitary dilation will provide us with a solution to the moment problem.

At first, we suppose that the following condition holds:
\begin{equation}
\label{f2_9}
\sum_{k=1}^\infty \| S_k \|^2 < \infty,
\end{equation}
where $\| S_k \| = \left(
\sum_{l,j=0}^{N-1} | s_{k;l,j} |^2
\right)^{1/2}$.
Denote $H:= l^2$, and define the following sequence of vectors $\{ g_n \}_{n=0}^\infty$ in $H$:
\begin{equation}
\label{f2_11}
g_j := \vec e_j,\qquad j\in\mathbb{Z}_{0,N-1},
\end{equation}
\begin{equation}
\label{f2_12}
g_{kN+j} := \vec e_{kN+j} + \sum_{l=0}^{N-1} s_{k;l,j} \vec e_l,\qquad j\in\mathbb{Z}_{0,N-1},\ k\in\mathbb{N}.
\end{equation}
Observe that vectors $\{ g_n \}_{n=0}^\infty$ are linearly independent. Therefore we may define the following operator $A$
with the domain $D(A) = l^2_{\mathrm{fin}}$:
\begin{equation}
\label{f2_15}
A u = \sum_{j=0}^\infty \alpha_j g_{j+N},\quad u = \sum_{j=0}^\infty \alpha_j g_j\in l^2_{\mathrm{fin}},\ \alpha_j\in\mathbb{C},
\end{equation}
where all but finite number of $\alpha_j$s are zeros. The latter condition will be assumed in what follows, when dealing with
elements of $l^2_{\mathrm{fin}}$.
Notice that $A$ is linear and
\begin{equation}
\label{f2_17}
(A^k g_j, g_l)_H = (g_{kN+j}, \vec e_l)_H = s_{k;l,j},\qquad l,j\in\mathbb{Z}_{0,N-1},\ k\in\mathbb{N}.
\end{equation}
Let us calculate the matrix $\mathcal{M}_A$ of $A$ with respect to the standard basis $\{ \vec e_n \}_{n=0}^\infty$:
$$ \mathcal{M}_A = (m_{A;n,d})_{n,d=0}^\infty,\quad  m_{A;n,d} := (A \vec e_d, \vec e_n)_H. $$
If $d\in\mathbb{Z}_{0,N-1}$, then
$$ m_{A;n,d} = (A g_d, \vec e_n)_H = (g_{N+d}, \vec e_n)_H  
= \left(
\vec e_{N+d} + \sum_{l=0}^{N-1} s_{1;l,d} \vec e_l, \vec e_n
\right)_H = $$
\begin{equation}
\label{f2_19}
= \left\{
\begin{array}{ccc}
 s_{1;n,d}, & n\in \mathbb{Z}_{0,N-1} \\
\delta_{n,N+d}, & n\in \mathbb{Z}_{N,2N-1} \\
0, & n\geq 2N\end{array}
\right..
\end{equation}
If $d\geq N$, $d=kN+j$, $k\in\mathbb{N}$, $j\in\mathbb{Z}_{0,N-1}$, then
$$ m_{A;n,d} = \left(
A \left(
g_{kN+j} - \sum_{l=0}^{N-1} s_{k;l,j} \vec e_l
\right), \vec e_n
\right)_H = $$
$$ = \left(
g_{(k+1)N+j} - \sum_{l=0}^{N-1} s_{k;l,j} g_{N+l}, \vec e_n
\right)_H = $$
$$ = 
\left(
\vec e_{(k+1)N+j} +
\sum_{l=0}^{N-1} s_{k+1;l,j} \vec e_l -
\sum_{l=0}^{N-1} s_{k;l,j}
\left(
\vec e_{N+l} + \sum_{t=0}^{N-1} s_{1;t,l} \vec e_t
\right), \vec e_n
\right)_H =
$$
\begin{equation}
\label{f2_25}
= \left\{
\begin{array}{ccc}
s_{k+1;n,j} - \sum_{l=0}^{N-1} s_{k;l,j} s_{1;n,l}, & n\in \mathbb{Z}_{0,N-1} \\
- s_{k;n-N,j}, & n\in \mathbb{Z}_{N,2N-1} \\
\delta_{n,N+d}, & n\geq 2N\end{array}
\right..
\end{equation}
Consequently, the matrix $\mathcal{M}_A$ has the following block structure:
\begin{equation}
\label{f2_27}
\mathcal{M}_A =
\left(
\begin{array}{ccccc}
S_1 & S_2 - S_1^2 & S_3 - S_1 S_2 & S_4 - S_1 S_3 & \ldots \\
E_N & - S_1 & -S_2 & -S_3 & \ldots \\
0_{\infty\times N} &  & E &  \end{array}
\right),
\end{equation}
where $0_{\infty\times N}$ is a null matrix with the infinite number of rows and $N$ columns,
and $E$ denotes a semi-infinite matrix with units on the main diagonal and zeros elsewhere.

Denote by $\vec v_n = (v_{n;j})_{j=0}^\infty$ the $n$-th row of $\mathcal{M}_A$, $n=0,1,...,2N-1$. 
Vectors
$\vec v_N, \vec v_{N+1}, ...,\vec v_{2N-1}$ belong to $l^2$, since the partial sums for $\| \vec v_j \|^2$ ($j\in\mathbb{Z}_{N,2N-1}$)
are majorized by partial sums of the series from~(\ref{f2_9}), incremented by $1$.
Since 
$$ \left(
\begin{array}{ccccc}
S_1 & S_2 - S_1^2 & S_3 - S_1 S_2 & S_4 - S_1 S_3 & \ldots \end{array}
\right) = $$
$$ = \left(
\begin{array}{ccccc}
S_1 & S_2 & S_3 & S_4 & \ldots \end{array}  
\right) -
S_1 
\left(
\begin{array}{ccccc}
0 & S_1 & S_2 & S_3 & \ldots \end{array}
\right), $$
then $\vec v_0, \vec v_1, ...,\vec v_{N-1}$ belong to $l^2$ as well.

Consider the following shift operator, defined on the whole $l^2$:
\begin{equation}
\label{f2_29}
S \vec u = u_N \vec e_{2N} + u_{N+1} \vec e_{2N+1} + \ldots,\qquad  \vec u = (u_j)_{j=0}^\infty\in l^2.
\end{equation}
The operator $S$ is linear and bounded.
Define the following operator on the whole $l^2$:
\begin{equation}
\label{f2_31}
B \vec u = S \vec u + \sum_{m=0}^{2N-1} (\vec u, \vec w_m)_H \vec e_m,\qquad  \vec u\in l^2,
\end{equation}
where $\vec w_m := ( \overline{v_{m;j}} )_{j=0}^\infty$.
The operator $B$ is linear and bounded. Moreover, the matrix of $B$
with respect to the standard basis $\{ \vec e_n \}_{n=0}^\infty$ coincides with $\mathcal{M}_A$. 
Therefore $A$ is bounded and $B$ is its extension by the continuity to the whole $l^2$.

Choose an arbitrary positive number $\rho$ greater than  $\| B \|$. Denote
$T := \frac{1}{\rho} B$. Notice that the operator $T$ is a \textit{completely non-unitary (c.n.u.)}
contraction on $H$. We may apply the power dilation theorem
(see, e.g., \cite{cit_995_Sz.-Nagy_Book}).
Namely, there exists a unitary operator $U$ in a Hilbert space $\widetilde H\supseteq H$ such that
\begin{equation}
\label{f2_34}
P^{\widetilde H}_H U^k |_H = T^k,\qquad k\in\mathbb{Z}_+.
\end{equation}
We may write
$$ (U^k g_j, g_l)_{\widetilde H} = (T^k g_j, g_l)_H = \frac{1}{\rho^k} (B^k g_j, g_l)_H = 
\frac{1}{\rho^k} (A^k g_j, g_l)_H = $$
$$ = \frac{1}{\rho^k} (g_{kN+j}, g_l)_H = \frac{1}{\rho^k} s_{k;l,j},\qquad l,j\in\mathbb{Z}_{0,N-1},\ k\in\mathbb{N}. $$
where the last equality follows from~(\ref{f2_17}).
Then
\begin{equation}
\label{f2_36}
\left(
(\rho U)^k g_j, g_l
\right)_{\widetilde H} = s_{k;l,j},\qquad l,j\in\mathbb{Z}_{0,N-1},\ k\in\mathbb{Z}_+.
\end{equation}
Applying the functional calculus for the normal operator $\rho U$ we get
$$ \int_{|z|=\rho} z^k d(F g_j, g_l)_{\widetilde H} = s_{k;l,j},,\qquad l,j\in\mathbb{Z}_{0,N-1},\ k\in\mathbb{Z}_+, $$
where $F(\delta)$ ($\delta\in\mathfrak{B}(\mathbb{C})$) is the orthogonal spectral measure of $\rho U$.
Then 
\begin{equation}
\label{f2_38}
M(\delta) := \left( (F(\delta) g_j, g_l)_{\widetilde H} \right)_{l,j=0}^{N-1},\qquad \delta\in\mathfrak{B}(\mathbb{C}),
\end{equation}
is a required solution of the moment problem~(\ref{f2_5}).

Let us consider the general case. 
Choose an arbitrary number $\xi$ greater than $R$ from~(\ref{f2_7}).
Set
\begin{equation}
\label{f2_42}
\widehat S_k := (\widehat s_{k;l,j})_{l,j=0}^{N-1},\qquad  k\in\mathbb{Z}_+,
\end{equation}
with
\begin{equation}
\label{f2_44}
\widehat s_{k;l,j} := s_{k;l,j} / \xi^k,\qquad l,j\in\mathbb{Z}_{0,N-1},\ k\in\mathbb{Z}_+.
\end{equation}
Then
\begin{equation}
\label{f2_46}
| \widehat s_{k;l,j} | = |s_{k;l,j}| / \xi^k\leq C \left(
\frac{R}{\xi}
\right)^k,\qquad l,j\in\mathbb{Z}_{0,N-1},\ k\in\mathbb{Z}_+.
\end{equation}
Consequently, we have
$$ \sum_{k=1}^\infty \| \widehat S_k \|^2 < \infty. $$
By the first part of the proof, there exists $\widehat\rho > 0$, and a unitary operator $\widehat U$ in a Hilbert space $\widehat H\supseteq H$, 
which is a dilation of a c.n.u. contraction on $H$, such that
\begin{equation}
\label{f2_48}
\left(
(\widehat\rho \widehat U)^k g_j, g_l
\right)_{\widehat H} = \widehat s_{k;l,j},\qquad l,j\in\mathbb{Z}_{0,N-1},\ k\in\mathbb{Z}_+,
\end{equation}
where $g_js$ are now constructed for the new moments. 
Then
\begin{equation}
\label{f2_50}
\left(
(\xi \widehat\rho \widehat U)^k g_j, g_l
\right)_{\widehat H} = s_{k;l,j},\qquad l,j\in\mathbb{Z}_{0,N-1},\ k\in\mathbb{Z}_+,
\end{equation}
and we may use the spectral measure of $\xi \widehat\rho \widehat U$ to get a solution of the moment problem~(\ref{f2_5}).

Let us check the validity of representation~(\ref{f2_8}). For this purpose we shall use formula~(\ref{f2_50}).
Let $E(\delta)$ ($\delta\in\mathfrak{B}(\mathbb{T})$) be the orthogonal spectral measure of a unitary operator $\widehat U$.
Since $\widehat U$ is a dilation of a c.n.u. contraction, then all scalar measures of the following form:
$$ (E(\delta) h,h),\qquad h\in H,\ h\not= 0, $$
are equivalent to the Lebesgue measure on $\mathfrak{B}(\mathbb{T})$, see~\cite[p.~88]{cit_995_Sz.-Nagy_Book}.
Define
\begin{equation}
\label{f2_51}
\widehat M(\delta) := \left( (E(\delta) g_j, g_l)_{\widehat H} \right)_{l,j=0}^{N-1},\qquad \delta\in\mathfrak{B}(\mathbb{T}).
\end{equation}
The trace measure $\tau = \tau_{\widehat M}$ is absolutely continuous with respect to the Lebesgue measure $\lambda$
on $\mathfrak{B}(\mathbb{T})$.
Set $r = \xi \widehat\rho$.
By relation~(\ref{f2_50}) we may write:
$$  S_k = 
\oint_{|z|=1} (rz)^k d \widehat M(\delta)  =
\oint_{|z|=1} (rz)^k \widehat M'_\tau d\tau =
\oint_{|z|=1} (rz)^k \widehat M'_\tau \tau'_\lambda d\lambda,\qquad k\in\mathbb{Z}_+. $$
Set $W(z) := \widehat M'_\tau \tau'_\lambda$ to obtain relation~(\ref{f2_8}).
$\Box$

Consider the following moment problem:
find a $\mathbb{C}_{N\times N}^\geq$-valued measure $M(\delta)$, $\delta\in\mathfrak{B}(\mathbb{C})$, such that
\begin{equation}
\label{f2_55}
\int_\mathbb{C} z^k dM(z) = S_k,\qquad k\in\mathbb{Z}_{0,d},
\end{equation}
where $S_k\in \mathbb{C}_{N\times N}$ are prescribed matrices, $N,d\in\mathbb{N}$.
A scalar version of this truncated moment problem appeared in~\cite{cit_2500_Axioms_2022}.

\begin{corollary}
\label{c2_1}
 Let the moment problem~(\ref{f2_55}) be given with $S_0 = E_N$, $S_k \in \mathbb{C}_{N\times N}$, $k\in\mathbb{Z}_{1,d}$.
Then the moment problem~(\ref{f2_55}) has a solution $M(\delta)$, supported on a circle centered at the origin. 
\end{corollary}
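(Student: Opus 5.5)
The plan is to reduce the truncated problem~(\ref{f2_55}) to the full moment problem~(\ref{f2_5}) and then apply Theorem~\ref{t2_1}. We extend the finite sequence $S_0=E_N, S_1,\dots,S_d$ to an infinite one by setting
$$ S_k := 0_{N\times N},\qquad k\geq d+1. $$
Then there are only finitely many nonzero moments, so the growth condition~(\ref{f2_7}) holds automatically. We take $R=1$ and
$$ C := \max\Bigl\{1,\ \max_{1\leq k\leq d}\ \max_{0\leq l,j\leq N-1} |s_{k;l,j}|\Bigr\}. $$
With these choices $|s_{k;l,j}|\leq C\leq C R^k$ holds for every $k\in\mathbb{N}$: trivially for $k>d$ (the entries vanish), and by the definition of $C$ for $k\leq d$.

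Theorem~\ref{t2_1} now applies to the extended sequence, since $S_0=E_N$. It yields a $\mathbb{C}_{N\times N}^\geq$-valued measure $M$ on $\mathfrak{B}(\mathbb{C})$, supported on a circle centered at the origin, with $\int_\mathbb{C} z^k\,dM(z)=S_k$ for all $k\in\mathbb{Z}_+$. In particular the identity holds for $k\in\mathbb{Z}_{0,d}$, so $M$ solves the truncated problem~(\ref{f2_55}) and has the required support.

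There is essentially no obstacle here. The only point to check is that the zero extension does not break any hypothesis of Theorem~\ref{t2_1}. No positivity or compatibility condition is imposed on the moments with $k\geq1$; the only requirements are $S_0=E_N$ and the exponential bound. So the trivial extension is admissible. If desired, we also obtain the representation~(\ref{f2_8}) with a bounded density $W$ on $\mathbb{T}$ for the truncated problem.
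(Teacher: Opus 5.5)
Your proposal is correct and follows the same route as the paper: extend the moments by $S_k=0$ for $k>d$ and apply Theorem~\ref{t2_1}. Your explicit choice $R=1$, $C=\max\{1,\max_{k,l,j}|s_{k;l,j}|\}$ is a valid check of the growth condition~(\ref{f2_7}), a step the paper leaves implicit.
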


\noindent
\textbf{Proof.}
We may define $S_k = 0$, for $k>d$. Then we apply Theorem~\ref{t2_1} to obtain a solution of the truncated moment problem.
$\Box$

\subsection{The case of complex symmetric banded matrices}

In this subsection we shall study case~A, as it was described in the Introduction. We shall need some objects from~\cite{cit_2500_Z_Serdica},
which are related to a complex symmetric banded matrix $\mathbf{J}$.

Let $\{ p_n(\lambda) \}_{n=0}^\infty$ be a sequence of polynomials which is a solution of difference equation~(\ref{f1_15}), with
the following initial conditions:

\begin{equation}
\label{f2_57}
p_j(\lambda) = \lambda^j,\qquad j\in\mathbb{Z}_{0,N-1}.
\end{equation}
We define a linear with respect to the both arguments functional $\sigma$ by the following relation and the linearity:
\begin{equation}
\label{f2_59}
\sigma(p_n(\lambda),p_m(\lambda)) = \delta_{n,m},\qquad n,m\in\mathbb{Z}_+.
\end{equation}
The functional $\sigma$ is said to be \textit{the spectral function of difference equation~(\ref{f1_15})}.
The functional $\sigma$ is \textit{symmetric}, the latter means that the following relation holds:
\begin{equation}
\label{f2_61}
\sigma(u(\lambda),v(\lambda)) = \sigma(v(\lambda),u(\lambda)),\qquad u,v\in\mathbb{P}.
\end{equation}
Moreover, the following necessary and sufficient conditions hold (cf.~\cite[Theorem 2]{cit_2500_Z_Serdica}). 

\begin{theorem}
\label{t2_2} 
A linear with respect to the both arguments symmetric functional $\sigma(u,v)$, $u,v\in\mathbb{P}$, is the spectral function
of a difference equation of type~(\ref{f1_15}) iff:
\begin{itemize}
\item[1)] $\sigma(\lambda^N u(\lambda),v(\lambda)) = \sigma(u(\lambda),\lambda^N v(\lambda)),\qquad u,v\in\mathbb{P}$;

\item[2)] $\sigma(\lambda^i,\lambda^j) = \delta_{i,j},\qquad i,j=0,1,\ldots, N-1$;

\item[3)] For arbitrary polynomial $u_k(\lambda)$ of degree $k$ there exists a polynomial $\widehat u_k(\lambda)$ of degree $k$
such that:
$$ \sigma(u_k(\lambda),\widehat u_k(\lambda)) \not= 0. $$
\end{itemize}
\end{theorem}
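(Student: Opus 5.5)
We prove necessity and sufficiency separately. Throughout we use that, by (\ref{f1_9}) and (\ref{f1_11}), the recurrence (\ref{f1_15}) with initial conditions (\ref{f2_57}) determines each $p_n$ uniquely as a polynomial of exact degree $n$. Explicitly, row $k$ of (\ref{f1_15}) reads
$$\lambda^N p_k=\sum_{|l-k|\le N} g_{k,l}p_l ,$$
and $g_{k,k+N}\neq0$ lets us solve for $p_{k+N}$, raising the degree by $N$. Hence $\{p_n\}$ is a basis of $\mathbb{P}$, and the bilinear functional $\sigma$ in (\ref{f2_59}) is well defined.

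\emph{Necessity.} Let $\sigma$ be the spectral function. Condition 2) holds because $p_j=\lambda^j$ for $j<N$. For 1), by bilinearity it suffices to take $u=p_n$, $v=p_m$. The recurrence gives
$$\sigma(\lambda^Np_n,p_m)=g_{n,m}, \qquad \sigma(p_n,\lambda^Np_m)=g_{m,n},$$
and these are equal because $\mathbf{J}=\mathbf{J}^T$. For 3), write $u_k=\sum_{j\le k}c_jp_j$ with $c_k\ne0$ and take $\widehat u_k=p_k$. Then $\sigma(u_k,p_k)=c_k\neq0$.

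\emph{Sufficiency.} Given $\sigma$ satisfying 1)--3), we run a Gram--Schmidt process with respect to the symmetric bilinear form $\sigma$. This is the step I expect to be the main obstacle, since $\sigma$ is not positive and we must show no division by zero occurs. Condition 3) should guarantee that the $n\times n$ Gram matrices $G_n=(\sigma(\lambda^i,\lambda^j))_{i,j<n}$ are nondegenerate. Indeed, if $G_n c=0$ for some $c\ne 0$, let $u=\sum c_i\lambda^i$ of exact degree $k<n$. Then $\sigma(u,\lambda^j)=0$ for all $j<n$, hence $\sigma(u,v)=0$ for every $v$ of degree $\le k$, contradicting 3).

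Nondegeneracy of all $G_n$ yields, via an $LDL^T$-type factorization of a symmetric matrix with nonzero leading minors, unique monic $\sigma$-orthogonal polynomials $q_n$ of degree $n$ with $\sigma(q_n,q_n)=\det G_{n+1}/\det G_n\neq0$. We normalize by choosing a complex square root, $p_n=q_n/\sqrt{\sigma(q_n,q_n)}$, so that $\sigma(p_n,p_m)=\delta_{n,m}$. By 2), and after fixing the sign choices, $p_j=\lambda^j$ for $j<N$: the monomials $\lambda^j$, $j<N$, are already orthonormal, so Gram--Schmidt leaves them unchanged.

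Next, expand $\lambda^Np_n=\sum_m g_{n,m}p_m$ with $g_{n,m}=\sigma(\lambda^Np_n,p_m)$. By 1), $g_{n,m}=\sigma(p_n,\lambda^Np_m)=g_{m,n}$, so the matrix is symmetric. It vanishes when $m>n+N$ since $\deg(\lambda^N p_n)=n+N$, and by symmetry also when $m<n-N$. Finally $g_{n,n+N}$ is the ratio of leading coefficients, hence nonzero. Thus $\mathbf{J}=(g_{n,m})$ is a complex symmetric $(2N+1)$-diagonal matrix satisfying (\ref{f1_11}), the polynomials $p_n$ solve (\ref{f1_15}) with (\ref{f2_57}), and $\sigma$ is its spectral function by construction. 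We note that boundedness (\ref{f1_7}) is not part of the characterization in \cite{cit_2500_Z_Serdica}, so the statement is read for the equation of type (\ref{f1_15}) without the bound.
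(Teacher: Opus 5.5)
Your proof is correct and complete. The paper gives no proof of this theorem; it quotes it from the Serdica reference. Your argument is the standard Favard-type proof:
\begin{itemize}
\item necessity by testing 1) and 3) on the basis $\{p_n\}$;
\item sufficiency by showing via 3) that all Gram minors $\det G_n$ are nonzero;
\item orthonormalizing, with 2) forcing $p_j=\lambda^j$ for $j<N$;
\item reading off a complex symmetric $(2N+1)$-banded $\mathbf{J}$ from $g_{n,m}=\sigma(\lambda^N p_n,p_m)$, using 1) together with the symmetry of $\sigma$.
\end{itemize}

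Your caveat about (\ref{f1_7}) is also right, and in fact necessary. Since $\sigma$ fixes each $p_n$ up to sign, it fixes $|g_{n,m}|$. Hence the spectral function of an unbounded symmetric banded matrix satisfies 1)--3) without being the spectral function of any bounded one. So the ``if'' direction holds only when ``type (\ref{f1_15})'' is read without the bound.
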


We shall now define
\begin{equation}
\label{f2_63}
\Gamma := (\gamma_{j,n})_{j,n=0}^\infty,\quad
\gamma_{j,n} := \sigma(\lambda^j,\lambda^n),\ j,n\in\mathbb{Z}_+. 
\end{equation}
Let us represent the matrix $\Gamma$ as a block matrix in the following way:
\begin{equation}
\label{f2_65}
\Gamma = (G_{k,l})_{k,l=0}^\infty,\quad G_{k,l} := (\gamma_{kN+r,lN+s})_{r,s=0}^{N-1}. 
\end{equation}
By property~1) of Theorem~\ref{t2_2} we may write:
$$ \gamma_{kN+r,lN+s} = \sigma(\lambda^{kN+r},\lambda^{lN+s}) = \sigma(\lambda^{(k+l)N+r},\lambda^{s}) = 
\gamma_{(k+l)N+r,s}, $$
\begin{equation}
\label{f2_66}
k,l\in\mathbb{Z}_+,\ r,s\in\mathbb{Z}_{0,N-1}.
\end{equation}
Therefore
\begin{equation}
\label{f2_67}
G_{k,l} = G_{k+l,0},\qquad k,l\in\mathbb{Z}_+,
\end{equation}
and $\Gamma$ is a block Hankel matrix. Define
\begin{equation}
\label{f2_69}
S_n = G_{n,0},\qquad n\in\mathbb{Z}_+.
\end{equation}
Notice that $S_0 = E_N$. In order to apply Theorem~\ref{t2_1} we should check the estimate~(\ref{f2_7}).
For this purpose we shall consider the following transformation~$\kappa$. 
If $u(\lambda)\in\mathbb{P}$, then it has a unique representation as a linear combination of $p_r(\lambda)$:
\begin{equation}
\label{f2_71}
u(\lambda) = \sum_{r=0}^\infty c_r p_r(\lambda),\qquad c_r\in\mathbb{C},
\end{equation}
where all but a finite number of $c_r$s are zero. We set
\begin{equation}
\label{f2_73}
\kappa u := (c_0,c_1,c_2,\ldots)^T,
\end{equation}
where the superscript $T$ means transposition.
Let us check that
\begin{equation}
\label{f2_75}
\kappa (\lambda^N u(\lambda)) = \mathbf{J} (\kappa u(\lambda)),\qquad u\in\mathbb{P}.
\end{equation}
If $u(\lambda)$ has form~(\ref{f2_71}), then using the difference equation~(\ref{f1_15}) we may write:
$$ \lambda^N u(\lambda) = \sum_{r=0}^\infty c_r \lambda^N p_r(\lambda) = 
\sum_{r=0}^\infty \sum_{j=r-N}^{r+N} c_r g_{r,j} p_j(\lambda) = 
\sum_{j=0}^\infty \sum_{r=j-N}^{j+N} c_r g_{r,j} p_j(\lambda),  $$
where $g_{r,j}$, $c_r$ and $p_j$, having negative indices, are assumed to be zero. 
If
$\kappa (\lambda^N u(\lambda)) = (\widehat c_0,\widehat c_1,\widehat c_2,\ldots)^T$, then
$$ \widehat c_j = \sum_{r=j-N}^{j+N} c_r g_{r,j} = \sum_{r=j-N}^{j+N} g_{j,r} c_r, $$
and relation~(\ref{f2_75}) follows. Here we have used the complex symmetry of $\mathbf{J}$.

The induction argument shows that
\begin{equation}
\label{f2_77}
\kappa (\lambda^{kN} p_l(\lambda)) = \mathbf{J}^k (\kappa p_l(\lambda)),\qquad l\in\mathbb{Z}_{0,N-1},\ k\in\mathbb{Z}_+.
\end{equation}
Now we can check the estimate~(\ref{f2_7}). By~(\ref{f2_69}),(\ref{f2_65}),(\ref{f2_63}) we may write
$$ |s_{k;l,j}| = |\gamma_{kN+l,j}| = | \sigma(\lambda^{kN+l},\lambda^j) |
= | \sigma(\lambda^{kN} p_l(\lambda),p_j(\lambda)) |, $$
where $l,j\in\mathbb{Z}_{0,N-1}$, $k\in\mathbb{Z}_+$.
Observe that
\begin{equation}
\label{f2_78}
\sigma(u(\lambda),v(\lambda)) = (\kappa v)^T \kappa u,\qquad u,v\in\mathbb{P}. 
\end{equation}
Then
$$ |s_{k;l,j}| = 
\left|
(\kappa p_j(\lambda))^T \kappa(\lambda^{kN} p_l(\lambda))
\right|
= 
\left|
\vec e_j \mathbf{J}^k (\vec e_l)^T
\right| = $$
$$ = \left|
(\mathcal{J}^k \vec e_l, \vec e_j)_H
\right| \leq \| \mathcal{J} \|^k, $$
where by $\mathcal{J}$ we denote the extension by the continuity on $H=l^2$ of the operator of multiplication by $\mathbf{J}$ on $l^2_{\mathrm{fin}}$.
Thus, estimate~(\ref{f2_7}) holds true and we may apply Theorem~\ref{t2_1}. 
We obtain that there exists a $\mathbb{C}_{N\times N}^\geq$-valued measure $M(\delta)$, $\delta\in\mathfrak{B}(\mathbb{C})$, such that:
\begin{equation}
\label{f2_79}
\oint_{|z|=\alpha} z^k dM(z) = G_{k,0},\qquad k\in\mathbb{Z}_+,\ \alpha>0.
\end{equation}
Consider the following operators $R_{N,m}(p)$, $m\in\mathbb{Z}_{0,N-1}$, with the domain $\mathbb{P}$:
\begin{equation}
\label{f2_81}
R_{N,m}(p)(t) = \sum_{n=0}^\infty \frac{ p^{(nN+m)}(0) }{ (nN+m)! } t^n,\qquad p\in\mathbb{P}.
\end{equation}
Operators $R_{N,m}(p)$ were intensively used by Dur\'an, see, e.g.~\cite{cit_7100_Duran__1995}.
Observe that 
\begin{equation}
\label{f2_83}
R_{N,m}(\lambda^{kN+r})(t) = \left\{
\begin{array}{cc} 0, & \mbox{if $r\not= m$} \\
t^k, & \mbox{if $r = m$} \end{array}
\right.,\qquad r\in\mathbb{Z}_{0,N-1},\ k\in\mathbb{Z}_+.
\end{equation}
By~(\ref{f2_65}),(\ref{f2_66}),(\ref{f2_79}) we may write:
$$ \gamma_{kN+r,lN+s} = \gamma_{(k+l)N+r,s} = \vec e_r S_{k+l} (\vec e_s)^* = 
\oint_{|z|=\alpha} z^k \vec e_r M'_\tau (\vec e_s)^* z^l d\tau = $$
$$ = \oint_{|z|=\alpha} 
\left(
R_{N,0}(z^{kN+r}), R_{N,1}(z^{kN+r}),\ldots, R_{N,N-1}(z^{kN+r})
\right)
M'_\tau * $$
\begin{equation}
\label{f2_84}
* \left(
\begin{array}{cccc} R_{N,0}(z^{lN+s}) \\
R_{N,1}(z^{lN+s}) \\ 
\ldots \\
R_{N,N-1}(z^{lN+s})\end{array}
\right) d\tau,\qquad k,l\in\mathbb{Z}_+,\ r,s\in\mathbb{Z}_{0,N-1}, 
\end{equation}
where $\tau = \tau_M$ is the trace measure for $M$.
Therefore
$$ \sigma(z^n,z^j) = \gamma_{n,j} = 
\oint_{|z|=\alpha} 
\left(
R_{N,0}(z^n), R_{N,1}(z^n),\ldots, R_{N,N-1}(z^n)
\right)
M'_\tau * $$
\begin{equation}
\label{f2_85}
* \left(
\begin{array}{cccc} R_{N,0}(z^j) \\
R_{N,1}(z^j) \\ 
\ldots \\
R_{N,N-1}(z^j)\end{array}
\right) d\tau,\qquad n,j\in\mathbb{Z}_+.
\end{equation}
By the linearity of $\sigma$ in both arguments we obtain that
$$ \sigma(u,v) = 
\oint_{|z|=\alpha} 
\left(
R_{N,0}(u)(z), R_{N,1}(u)(z),\ldots, R_{N,N-1}(u)(z)
\right)
M'_\tau * $$
\begin{equation}
\label{f2_87}
* \left(
\begin{array}{cccc} R_{N,0}(v)(z) \\
R_{N,1}(v)(z) \\ 
\ldots \\
R_{N,N-1}(v)(z)\end{array}
\right) d\tau,\qquad u,v\in\mathbb{P}.
\end{equation}
By the definition of the spectral function $\sigma$ we obtain the following orthogonality relations:
$$ \oint_{|z|=\alpha} 
\left(
R_{N,0}(p_n)(z), R_{N,1}(p_n)(z),\ldots, R_{N,N-1}(p_n)(z)
\right)
M'_\tau * $$
\begin{equation}
\label{f2_89}
* \left(
\begin{array}{cccc} R_{N,0}(p_m)(z) \\
R_{N,1}(p_m)(z) \\ 
\ldots \\
R_{N,N-1}(p_m)(z)\end{array}
\right) d\tau = \delta_{n,m},\qquad n,m\in\mathbb{Z}_+.
\end{equation}

\begin{theorem}
\label{t2_3}
Let $N\in\mathbb{N}$ and $\mathbf{J}$ be a semi-infinite complex symmetric matrix of form~(\ref{f1_5}) satisfying
conditions~(\ref{f1_7})-(\ref{f1_11}). 
Let $\{ p_n(\lambda) \}_{n=0}^\infty$ be a sequence of polynomials which is a solution of difference equation~(\ref{f1_15}), with
the initial conditions~(\ref{f2_57}). 
Then there exists a $\mathbb{C}_{N\times N}^\geq$-valued measure $M(\delta)$, $\delta\in\mathfrak{B}(\mathbb{C})$, 
supported on a circle $\{ z\in\mathbb{C}\ \arrowvert \ |z| = \alpha \}$, $\alpha>0$, such that
relation~(\ref{f2_89}) holds.
\end{theorem}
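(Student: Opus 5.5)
The proof will assemble the computations carried out just before the statement into one argument. Throughout, $\mathbf{J}$ is complex symmetric and satisfies (\ref{f1_7})--(\ref{f1_11}).

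\textbf{Step 1: the moments and the growth bound.}
\begin{itemize}
\item Let $\{p_n\}$ be the polynomial solution with initial conditions (\ref{f2_57}). Under (\ref{f1_11}) each $p_n$ has degree exactly $n$, so the $p_n$ form a basis of $\mathbb{P}$.
\item Define $\sigma$ by (\ref{f2_59}). Equivalently, $\sigma(u,v)=(\kappa v)^T\kappa u$ as in (\ref{f2_78}), which makes $\sigma$ bilinear and symmetric.
\item Relation (\ref{f2_75}) holds because $\mathbf{J}=\mathbf{J}^T$. Combined with (\ref{f2_78}), this gives property 1) of Theorem~\ref{t2_2}: $\sigma(\lambda^N u,v)=(\kappa v)^T\mathbf{J}\kappa u=(\mathbf{J}\kappa v)^T\kappa u=\sigma(u,\lambda^N v)$.
\item Consequently the matrix $\Gamma$ of (\ref{f2_63}) is block Hankel, i.e.\ (\ref{f2_67}) holds. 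Set $S_n:=G_{n,0}$; then $S_0=E_N$ by (\ref{f2_57}).
\item For the bound (\ref{f2_7}), write $s_{k;l,j}=\sigma(\lambda^{kN}p_l,p_j)=\vec e_j\mathbf{J}^k\vec e_l^{\,T}$, using (\ref{f2_77}) and (\ref{f2_78}). Hence $|s_{k;l,j}|\le\|\mathcal{J}\|^k$.
\item The operator $\mathcal{J}$ is bounded on $l^2$ because $\mathbf{J}$ has at most $2N+1$ nonzero diagonals, each bounded by $C$. Thus $\|\mathcal{J}\|\le(2N+1)C$, and (\ref{f2_7}) holds with $R=(2N+1)C$.
\end{itemize}

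\textbf{Step 2: the measure.} Theorem~\ref{t2_1} now applies. It produces a $\mathbb{C}^\geq_{N\times N}$-valued measure $M$, supported on a circle $|z|=\alpha$, with $\oint z^k\,dM=S_k$ for all $k\in\mathbb{Z}_+$.

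\textbf{Step 3: expressing $\sigma$ through $M$.} This is the step needing the most care, though it is bookkeeping rather than a genuine obstacle.
\begin{itemize}
\item For monomials $z^{kN+r}$ and $z^{lN+s}$ with $r,s\in\mathbb{Z}_{0,N-1}$, use $\gamma_{kN+r,lN+s}=\vec e_rS_{k+l}\vec e_s^{\,*}=\oint z^kz^l\,\vec e_rM'_\tau\vec e_s^{\,*}\,d\tau$.
\item By (\ref{f2_83}), the row vector $(R_{N,m}(z^{kN+r}))_m$ equals $z^k\vec e_r$, and the column $(R_{N,m}(z^{lN+s}))_m$ equals $z^l\vec e_s^{\,T}$.
\item The column carries \emph{no} complex conjugation. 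Since $\vec e_s$ is real, $\vec e_s^{\,*}=\vec e_s^{\,T}$, so the integrand matches the one in (\ref{f2_84}). This is exactly why the result is a $J$-orthogonality rather than an ordinary one.
\item The resulting formula (\ref{f2_85}) holds for all monomials. Both sides are bilinear and each $R_{N,m}$ is linear, so extending gives (\ref{f2_87}) for all $u,v\in\mathbb{P}$.
\end{itemize}

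\textbf{Step 4: conclusion.} Substitute $u=p_n$, $v=p_m$ into (\ref{f2_87}) and use (\ref{f2_59}). This yields (\ref{f2_89}), which completes the proof.
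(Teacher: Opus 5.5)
Your proposal is correct and follows essentially the same route as the paper. It uses the block Hankel moments $S_k=G_{k,0}$ with $S_0=E_N$, the bound $|s_{k;l,j}|\le\|\mathcal{J}\|^k$ via $\kappa$, Theorem~\ref{t2_1}, and the rewriting of $\sigma$ through the operators $R_{N,m}$. Your small additions, deriving property 1) directly from $\mathbf{J}=\mathbf{J}^T$ and the explicit estimate $\|\mathcal{J}\|\le(2N+1)C$, fill in details that the paper leaves implicit.
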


\noindent
\textbf{Proof.} The proof follows from considerations before the statement of the theorem. $\Box$

\subsection{The case of not necessarily symmetric banded matrices}

This subsection is devoted to a discussion on banded matrices of case~B (see the Introduction). 
We shall use some objects from~\cite{cit_2500_Serdica_2011}, which are related to a banded matrix $\mathbf{J}$ within case~B.

Let $\{ p_n(\lambda) \}_{n=0}^\infty$ be a sequence of \textit{monic} polynomials which is a solution of difference equation~(\ref{f1_15}), with
the following initial conditions:

\begin{equation}
\label{f2_91}
p_j(\lambda) = \lambda^j,\qquad j\in\mathbb{Z}_{0,N-1}.
\end{equation}

In case~B the spectral function is defined in a quite another way, as comparing with case~A.
A sesquilinear functional $\sigma=\sigma(u,v)$, $u,v\in\mathbb{P}$ (i.e. linear in the first argument, 
antilinear in the second argument, but not necessarily $\sigma(u,v)=\overline{\sigma(v,u)}$) which
satisfies the following relations:
\begin{equation}
\label{f2_94}
\sigma(p_n(\lambda),p_j(\lambda)) = \delta_{n,j},\qquad n,j\in\mathbb{Z}_{0,N-1};
\end{equation}
\begin{equation}
\label{f2_96}
\sigma(p_n(\lambda),p_j(\lambda)) = 0,\qquad n,j\in\mathbb{Z}_+:\ n>j;
\end{equation}
\begin{equation}
\label{f2_98}
\sigma(\lambda^N u(\lambda),v(\lambda)) = \sigma(u(\lambda), \lambda^N v(\lambda)),\qquad u,v\in\mathbb{P},
\end{equation}
is said to be \textit{the (generalized) spectral function of the matrix $\mathbf{J}$}.
In~\cite{cit_2500_Serdica_2011} it was shown that the spectral function of $\mathbf{J}$ is unique, and
a procedure of the construction of $\sigma$ was provided.
The following necessary and sufficient conditions were proved in~\cite[Theorem 1]{cit_2500_Serdica_2011}.

\begin{theorem}
\label{t2_4} 
A sesquilinear functional $\sigma(u,v)$, $u,v\in\mathbb{P}$, is the spectral function
of a complex banded matrix $\mathbf{J} = ( g_{m,n} )_{m,n=0}^\infty$, $g_{m,n}\in\mathbb{C}$, 
satisfying relations~(\ref{f1_9}),(\ref{f1_11}) and 
such that $g_{k,k+N}=1$, $k\in\mathbb{Z}$,
if and only if:
\begin{itemize}
\item[1)] $\sigma(\lambda^N u(\lambda),v(\lambda)) = \sigma(u(\lambda),\lambda^N v(\lambda)),\qquad u,v\in\mathbb{P}$;

\item[2)] $\sigma(\lambda^k,\lambda^l) = \delta_{k,l},\qquad k,l=0,1,\ldots, N-1$;

\item[3)] $\det\Gamma_M\not= 0$, $M\in\mathbb{Z}_+$, where $\Gamma_M = (\gamma_{k,l})_{k,l=0}^N$, 
$\gamma_{k,l} = \sigma(\lambda^k,\lambda^l)$.
\end{itemize}
\end{theorem}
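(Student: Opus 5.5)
We argue separately for necessity and sufficiency. The tool in both directions is the triangular structure of the Gram matrix $\Gamma=(\gamma_{k,l})$ with respect to the basis $\{p_n\}$. We read condition 3) as: every leading principal minor $\det(\gamma_{k,l})_{k,l=0}^{M}$, $M\in\mathbb{Z}_+$, is nonzero. We also use the convention $\lambda^N p_n=\sum_{j=n-N}^{n+N} g_{n,j}p_j$ for the difference equation.

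\emph{Necessity.} Let $\sigma$ be the spectral function of $\mathbf{J}$. Condition 1) is (\ref{f2_98}). Condition 2) follows from (\ref{f2_94}), because $p_j=\lambda^j$ for $j<N$ by (\ref{f2_91}).

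For 3), write $\lambda^n=\sum_{j\le n} c_{n,j}p_j$ with $c_{n,n}=1$; this is possible since the $p_n$ are monic. Then $\Gamma_M=L\,(\sigma(p_k,p_l))_{k,l=0}^M\,L^*$ with $L$ lower unitriangular. By (\ref{f2_96}) the middle matrix is triangular, so $\det\Gamma_M=\prod_{n\le M}\sigma(p_n,p_n)$.

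It remains to show $\sigma(p_n,p_n)\neq 0$, by induction on $n$. For $n<N$ this is (\ref{f2_94}). For $n\ge N$, use the recurrence $p_n=\lambda^N p_{n-N}-\sum_{j<n} g_{n-N,j}p_j$, valid since $g_{n-N,n}=1$. In the second argument the terms with $j<n$ vanish by (\ref{f2_96}), so
$$\sigma(p_n,p_n)=\sigma(p_n,\lambda^Np_{n-N})=\sigma(\lambda^Np_n,p_{n-N})=\sum_{j\ge n-N} g_{n,j}\,\sigma(p_j,p_{n-N}).$$
In the last sum only $j=n-N$ survives, again by (\ref{f2_96}). Hence $\sigma(p_n,p_n)=g_{n,n-N}\,\sigma(p_{n-N},p_{n-N})$, which is nonzero by (\ref{f1_11}) and the induction hypothesis.

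\emph{Sufficiency.} Let $\sigma$ satisfy 1)--3). Since all leading principal minors of $\Gamma$ are nonzero, the linear systems involved are uniquely solvable, and we obtain two monic families:
\begin{itemize}
\item $p_n$ of degree $n$ with $\sigma(p_n,\lambda^j)=0$ for $j<n$;
\item $q_n$ of degree $n$ with $\sigma(\lambda^j,q_n)=0$ for $j<n$.
\end{itemize}
They satisfy $\sigma(p_n,p_j)=0$ for $n>j$, and $\sigma(p_n,q_n)=\sigma(p_n,\lambda^n)=\det\Gamma_n/\det\Gamma_{n-1}\neq0$. By 2), $p_j=\lambda^j$ for $j<N$, and (\ref{f2_94}) holds.

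Now expand $\lambda^Np_n=\sum_{j\le n+N}g_{n,j}p_j$. This defines $\mathbf{J}$ and yields (\ref{f1_15}); monicity gives $g_{n,n+N}=1$. To identify the coefficients, pair against the dual family and use 1):
$$g_{n,j}=\frac{\sigma(\lambda^Np_n,q_j)}{\sigma(p_j,q_j)}=\frac{\sigma(p_n,\lambda^Nq_j)}{\sigma(p_j,q_j)}.$$
This is zero when $j+N<n$, because $\lambda^Nq_j$ has degree $j+N<n$. So the band condition (\ref{f1_9}) holds. For $j=n-N$, the polynomial $\lambda^Nq_{n-N}$ is monic of degree $n$, hence $\sigma(p_n,\lambda^Nq_{n-N})=\sigma(p_n,\lambda^n)\neq0$, and therefore $g_{n,n-N}\neq0$. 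This gives (\ref{f1_11}).

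Thus $\sigma$ satisfies (\ref{f2_94})--(\ref{f2_98}) for this $\mathbf{J}$, i.e. $\sigma$ is its spectral function. The uniqueness result from \cite{cit_2500_Serdica_2011} ensures the identification is consistent.

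The main obstacle is the nonvanishing of the lower-diagonal entries $g_{n,n-N}$, in both directions. It is handled by the two displayed identities: the induction formula $\sigma(p_n,p_n)=g_{n,n-N}\,\sigma(p_{n-N},p_{n-N})$ for necessity, and the dual family $q_n$ for sufficiency. The dual family is needed because $\sigma$ is not Hermitian, so left and right orthogonality differ.
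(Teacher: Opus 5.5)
Your proof is correct and self-contained. The paper gives no proof of this statement: it quotes it from \cite{cit_2500_Serdica_2011} (Theorem~1 there). The only ingredient of that original argument reused here, in the proof of Theorem~\ref{t2_5}, is that $\sigma(p_n,\lambda^n)\neq 0$ for a spectral function. This is exactly what your induction delivers, since (\ref{f2_96}) gives $\sigma(p_n,p_n)=\sigma(p_n,\lambda^n)$.

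Three remarks.

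\emph{The sub-diagonal condition.} In the necessity part you use (\ref{f1_11}) in the form $g_{n,n-N}\neq 0$. As printed, the second condition $g_{l-N,l}\neq 0$ in (\ref{f1_11}) merely repeats the first one. You are therefore supplying the intended sub-diagonal condition $g_{l,l-N}\neq 0$. Your identity $\sigma(p_n,p_n)=g_{n,n-N}\,\sigma(p_{n-N},p_{n-N})$ shows that necessity of 3) genuinely depends on it, so say explicitly that you are using this reading.

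\emph{The dual family.} The family $q_n$ is convenient but not forced by the non-Hermitian setting. Pair $\lambda^N p_n=\sum_{j\le n+N} g_{n,j}p_j$ with $\lambda^i$ for $i\le n-N$. Using $\sigma(\lambda^N p_n,\lambda^i)=\sigma(p_n,\lambda^{N+i})$ and $\sigma(p_j,\lambda^i)=0$ for $j>i$, you get a triangular system with nonzero diagonal entries $\sigma(p_i,\lambda^i)$. It yields $g_{n,j}=0$ for $j<n-N$ and $g_{n,n-N}=\sigma(p_n,\lambda^n)/\sigma(p_{n-N},\lambda^{n-N})\neq 0$ directly. So your closing claim that the dual family is ``needed'' is an overstatement, though harmless.

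\emph{Uniqueness.} The appeal to uniqueness at the end is unnecessary. Satisfying (\ref{f2_94})--(\ref{f2_98}) for the constructed $\mathbf{J}$ is by definition what it means to be its spectral function.
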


Suppose that case~$B$ holds for a banded matrix $\mathbf{J} = ( g_{m,n} )_{m,n=0}^\infty$, $g_{m,n}\in\mathbb{C}$. 
Let $\sigma$ be the (generalized) spectral function of $\mathbf{J}$.
Set
\begin{equation}
\label{f2_99}
\gamma_{j,n} := \sigma(\lambda^j,\lambda^n),\qquad j,n\in\mathbb{Z}_+.
\end{equation}
The following matrix $\Gamma$:
\begin{equation}
\label{f2_100}
\Gamma := (\gamma_{j,n})_{j,n=0}^\infty,
\end{equation}
is a $(N\times N)$ block Hankel matrix:
\begin{equation}
\label{f2_104}
\Gamma = (G_{k+l})_{k,l=0}^\infty,\qquad G_j\in\mathbb{C}_{N\times N},
\end{equation}
and $G_0 = E_N$, see~\cite[Theorem 2]{cit_2500_Serdica_2011}.
In what follows we can repeat many of the arguments applied in the case~A.
As before, in order to apply Theorem~\ref{t2_1} we should check the estimate~(\ref{f2_7}).
For this purpose we shall use the transformation~$\kappa$ from~(\ref{f2_73}). 
Since  in case~B the matrix $\mathbf{J}$ is not necessarily complex symmetric, relation~(\ref{f2_75}) is replaced
by the following relation:
\begin{equation}
\label{f2_106}
\kappa (\lambda^N u(\lambda)) = \mathbf{J}^T (\kappa u(\lambda)),\qquad u\in\mathbb{P}.
\end{equation}
Using the induction argument we obtain that
\begin{equation}
\label{f2_108}
\kappa (\lambda^{kN} p_l(\lambda)) = (\mathbf{J}^T)^k (\kappa p_l(\lambda)),\qquad l\in\mathbb{Z}_{0,N-1},\ k\in\mathbb{Z}_+.
\end{equation}
Relation~(\ref{f2_78}) remains valid.
Let 
$$ S_k := G_k,\quad   S_k = (s_{k;l,j})_{l,j=0}^{N-1},\ s_{k;l,j}\in\mathbb{C},\quad k\in\mathbb{Z}_+. $$
We may write:
$$ |s_{k;l,j}| =
|\gamma_{kN+l,j}| = | \sigma(\lambda^{kN+l},\lambda^j) |
= | \sigma(\lambda^{kN} p_l(\lambda),p_j(\lambda)) | = $$
$$ = \left|
(\kappa p_j)^T \kappa(\lambda^{kN} p_l(\lambda))
\right|
= 
\left|
\vec e_j (\mathbf{J}^T)^k (\vec e_l)^T
\right| = 
\left|
(\mathcal{C}^k \vec e_l, \vec e_j)_H
\right| \leq \| \mathcal{C} \|^k, $$
where by $\mathcal{C}$ we denote the extension by the continuity on $H=l^2$ of the operator of multiplication by $\mathbf{J}^T$ on $l^2_{\mathrm{fin}}$.
So, estimate~(\ref{f2_7}) is valid. 
By Theorem~\ref{t2_1} there exists a $\mathbb{C}_{N\times N}^\geq$-valued measure $M(\delta)$, $\delta\in\mathfrak{B}(\mathbb{C})$, such that:
\begin{equation}
\label{f2_110}
\oint_{|z|=\alpha} z^k dM(z) = G_k,\qquad k\in\mathbb{Z}_+,\ \alpha>0.
\end{equation}
Relations~(\ref{f2_84}),(\ref{f2_85}) are valid now. However, in case~B the spectral function~$\sigma$ is not linear
with respect to the second argument. 
We define a new functional~$\widehat\sigma$ in the following way:
$$ \widehat\sigma(u,v) = 
\oint_{|z|=\alpha} 
\left(
R_{N,0}(u)(z), R_{N,1}(u)(z),\ldots, R_{N,N-1}(u)(z)
\right)
M'_\tau * $$
\begin{equation}
\label{f2_112}
* \left(
\begin{array}{cccc} R_{N,0}(v)(z) \\
R_{N,1}(v)(z) \\ 
\ldots \\
R_{N,N-1}(v)(z)\end{array}
\right) d\tau,\qquad u,v\in\mathbb{P}.
\end{equation}
This functional is linear with respect to the both arguments and it coincides with the spectral function $\sigma$ on monomials.
Thus, the functional $\widehat\sigma$ can be recovered from the spectral function $\sigma$ and vice versa.
We shall call $\widehat\sigma$ \textit{the additional spectral function of the matrix $\mathbf{J}$}.

By the linearity in the first argument we obtain that
\begin{equation}
\label{f2_114}
\widehat\sigma(u(\lambda),\lambda^j) = \sigma(u(\lambda),\lambda^j),\qquad u\in\mathbb{P},\ j\in\mathbb{Z}_+.
\end{equation}
Let
$$ p_j(\lambda) = \sum_{r=0}^j \alpha_{j,r} \lambda^r,\qquad \alpha_{j,r}\in\mathbb{C},\ j\in\mathbb{Z}_+. $$
By~(\ref{f2_114}),(\ref{f2_96}) we may write:
$$ \widehat\sigma(p_n(\lambda),p_j(\lambda)) = \sum_{r=0}^j \alpha_{j,r} \widehat\sigma(p_n(\lambda), \lambda^r) = 
\sum_{r=0}^j \alpha_{j,r} \sigma(p_n(\lambda), \lambda^r) = $$
\begin{equation}
\label{f2_116}
= \sigma\left( p_n(\lambda), \sum_{r=0}^j \overline{\alpha_{j,r}} \lambda^r \right) = 0,\qquad n,j\in\mathbb{Z}_+:\ n>j,
\end{equation}
since $\sum_{r=0}^j \overline{\alpha_{j,r}} \lambda^r$ is a linear combination of $p_k$s with $k<n$.

\begin{theorem}
\label{t2_5}
Let $N\in\mathbb{N}$ and $\mathbf{J}$ be a semi-infinite matrix of form~(\ref{f1_5}) satisfying
conditions~(\ref{f1_7})-(\ref{f1_11}), and $g_{k,k+N}=1$, $k=0,1,2,...$. 
Let $\{ p_n(\lambda) \}_{n=0}^\infty$ be a sequence of polynomials which is a solution of difference equation~(\ref{f1_15}), with
the initial conditions~(\ref{f2_91}). 
Then there exists a $\mathbb{C}_{N\times N}^\geq$-valued measure $M(\delta)$, $\delta\in\mathfrak{B}(\mathbb{C})$, 
supported on a circle $\{ z\in\mathbb{C}\ \arrowvert \ |z| = \alpha \}$, $\alpha>0$, such that
$$ \oint_{|z|=\alpha} 
\left(
R_{N,0}(p_n)(z), R_{N,1}(p_n)(z),\ldots, R_{N,N-1}(p_n)(z)
\right)
M'_\tau * $$
\begin{equation}
\label{f2_120}
* \left(
\begin{array}{cccc} R_{N,0}(p_m)(z) \\
R_{N,1}(p_m)(z) \\ 
\ldots \\
R_{N,N-1}(p_m)(z)\end{array}
\right) d\tau = \eta_n \delta_{n,m},\qquad \eta_n\not= 0,\ n,m\in\mathbb{Z}_+:\ n\geq m.
\end{equation}
\end{theorem}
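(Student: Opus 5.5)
The plan is to take the measure $M$ already built before the statement. By Theorem~\ref{t2_1}, applied to the moments $S_k=G_k$ with the bound $|s_{k;l,j}|\le\|\mathcal{C}\|^k$, it satisfies (\ref{f2_110}). The left-hand side of (\ref{f2_120}) is then exactly $\widehat\sigma(p_n,p_m)$ by the definition (\ref{f2_112}). So the theorem reduces to two facts about the additional spectral function $\widehat\sigma$:
\[
\widehat\sigma(p_n,p_m)=0 \quad (n>m), \qquad \eta_n:=\widehat\sigma(p_n,p_n)\neq 0 .
\]

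The first fact is relation (\ref{f2_116}). It comes from (\ref{f2_114}), which says $\widehat\sigma$ and $\sigma$ agree when the second argument is a monomial. Expanding $p_m=\sum_r\alpha_{m,r}\lambda^r$ in the second slot and using linearity of $\widehat\sigma$ gives $\sigma(p_n,\overline{p_m})$, where $\overline{p_m}$ denotes the polynomial with conjugated coefficients. This polynomial has degree $m<n$, hence lies in the span of $p_0,\dots,p_{m}$, and the right-hand side vanishes by (\ref{f2_96}). One point needs checking here: the $p_k$ have exact degree $k$. This holds because $p_j=\lambda^j$ for $j<N$, and the recurrence with $g_{k,k+N}=1$ gives $p_{k+N}=\lambda^Np_k-\sum_{j<k+N}g_{k,j}p_j$. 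So the $p_k$ are monic and form a basis of $\mathbb{P}$ adapted to degree.

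For the diagonal I would compute $\eta_n$ in the same way:
\[
\eta_n=\sum_{r=0}^n\alpha_{n,r}\,\sigma(p_n,\lambda^r)=\sigma(p_n,\lambda^n),
\]
since $\alpha_{n,n}=1$ and $\sigma(p_n,\lambda^r)=0$ for $r<n$ by (\ref{f2_96}). Sesquilinearity introduces no conjugation issue, because $\alpha_{n,n}=1$ is real. It remains to show $\sigma(p_n,\lambda^n)\ne0$.

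I expect this nondegeneracy to be the main obstacle. I would use condition~3) of Theorem~\ref{t2_4}, $\det\Gamma_n\neq0$, where $\Gamma_n=(\gamma_{k,l})_{k,l=0}^n$; the statement is presumably indexed up to $n$. Write each $p_k$, $k\le n$, in monomials using the lower unitriangular matrix $L=(\alpha_{k,r})$. Then
\[
\bigl(\sigma(p_k,\lambda^l)\bigr)_{k,l=0}^n=L\,\Gamma_n .
\]
By (\ref{f2_96}), together with the fact that each $\lambda^l$ lies in the span of $p_0,\dots,p_l$ (conjugate coefficients are irrelevant for spans), this matrix is upper triangular. Its determinant equals $\det\Gamma_n\ne0$, so every diagonal entry is nonzero, and in particular $\sigma(p_n,\lambda^n)\neq0$. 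Combining the two facts gives (\ref{f2_120}) for $n\ge m$, with $\alpha$ the radius supplied by Theorem~\ref{t2_1}.
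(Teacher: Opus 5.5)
Your proof is correct and follows the paper's route: you use the same measure from Theorem~\ref{t2_1}, identify the left-hand side of (\ref{f2_120}) with $\widehat\sigma(p_n,p_m)$, obtain the off-diagonal vanishing from (\ref{f2_114}) and (\ref{f2_116}), and reduce the diagonal to $\eta_n=\sigma(p_n,\lambda^n)$. The only difference is the nonvanishing step: the paper cites relation~(24) from the proof of Theorem~1 in the external reference, whereas you derive it from the necessity part of Theorem~\ref{t2_4} via $(\sigma(p_k,\lambda^l))_{k,l=0}^n=L\Gamma_n$, which gives $\det\Gamma_n=\prod_{k=0}^n\sigma(p_k,\lambda^k)$ and relies only on results stated in the paper (reading the determinant condition with $\Gamma_M=(\gamma_{k,l})_{k,l=0}^M$, as you did).
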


\noindent
\textbf{Proof.} The proof of formula~(\ref{f2_120}) for $n>m$ follows from considerations before the statement of the theorem. 
Suppose that for some $n\in\mathbb{Z}_+$ we have $\widehat\sigma(p_n,p_n)=0$.
Representing $p_n$ as a linear combination of $\lambda^n$ and $p_r$s with $r<n$, and using relations~(\ref{f2_116}),(\ref{f2_114})
we obtain that
$$ 0 = \widehat\sigma(p_n,\lambda^n) = \sigma(p_n,\lambda^n). $$
The last relation contradicts to relation~(24) in~\cite{cit_2500_Serdica_2011} (in the proof of Theorem~1 in~\cite{cit_2500_Serdica_2011}
it was shown that $\sigma(p_n(\lambda),\lambda^n))\not = 0$ for a spectral function $\sigma$).
$\Box$

\section{A complex rank-one perturbation of a free Jacobi matrix}

Let $N=1$ and $\mathbf{J} = \mathbf{J}_c$ be a semi-infinite matrix of form~(\ref{f1_5}) with
$g_{0,0}=c\in\mathbb{C}$, $g_{k,k+1}=1$, $k\in\mathbb{Z}_+$, $g_{j-1,j}=1$, $j\in\mathbb{N}$,
and the rest of the elements of $\mathbf{J}$ being zeros.
Of course, if the parameter $c$ is real we obtain a real Jacobi matrix.
Let $\{ p_n(\lambda) \}_{n=0}^\infty$ be a sequence of polynomials which is a solution of difference equation~(\ref{f1_15}), with
the initial condition $p_0(\lambda)=1$.
We define a linear with respect to the both arguments functional $\sigma$ by relations~(\ref{f2_59}).
The functional $\sigma$ is the spectral function of the corresponding difference equation~(\ref{f1_15}).
Notice that the latter difference equation in our case can be written as
\begin{equation}
\label{f3_3}
c y_0 + y_1 = \lambda y_0,
\end{equation}
\begin{equation}
\label{f3_4}
y_{n-1} + y_{n+1} = \lambda y_n,\qquad n\in\mathbb{N}.
\end{equation}
We shall need Chebyshev's polynomials of the second kind $U_n(x)$ and their scaled version $u_n(x)$:
\begin{equation}
\label{f3_5}
U_n(x) := \frac{ \sin((n+1)\arccos x) }{ \sqrt{1-x^2} },\quad u_n(x) := U_n(x/2),\quad n\in\mathbb{Z}_+,\ x\in(-1,1).
\end{equation}
Polynomials $p_n(\lambda)$ have the following explicit representation:
\begin{equation}
\label{f3_7}
p_n(\lambda) = u_n(\lambda) - c u_{n-1}(\lambda),\qquad n\in\mathbb{Z}_+,\ u_{-1}(\lambda) := 0..
\end{equation}
In fact, the right-hand side expression satisfies relations~(\ref{f3_3}),(\ref{f3_4}) and 
the initial conditions are the same.
On the other hand, one can express $u_n$ in terms of $p_n$ in the following way:
\begin{equation}
\label{f3_9}
u_n(\lambda) = \sum_{j=0}^n c^{n-j} p_j(\lambda),\qquad n\in\mathbb{Z}_+.
\end{equation}
This can be checked by the induction argument using relation~(\ref{f3_7}). For $n=0$ relation~(\ref{f3_9}) is valid. Suppose that it holds
for some $k\in\mathbb{Z}_+$. By~(\ref{f3_7}) we may write:
$$ u_{k+1}(\lambda) = p_{k+1}(\lambda) + c u_k(\lambda) = p_{k+1}(\lambda) + c \sum_{j=0}^k c^{k-j} p_j(\lambda) = 
\sum_{j=0}^{k+1} c^{k+1-j} p_j(\lambda). $$

For an arbitrary positive integer $k$ the following relations hold:
\begin{equation}
\label{f3_11}
z^{2k} = (2k)! \sum_{j=0}^k \frac{ (2j+1) }{ (k-j)! (k+j+1)! } u_{2j}(z),
\end{equation}
\begin{equation}
\label{f3_14}
z^{2k-1} = (2k-1)! \sum_{j=1}^k \frac{ 2j }{ (k-j)! (k+j)! } u_{2j-1}(z).
\end{equation}
These relations follow from more general relations for the Gegenbauer polynomials, 
see formula~(36) on page~283 in~\cite{cit_5150_Rainville}. 
Using expressions for $u_n$ from~(\ref{f3_9}) we obtain that
\begin{equation}
\label{f3_15}
z^{2k} = (2k)! \sum_{j=0}^k \sum_{m=0}^{2j} \frac{ (2j+1) }{ (k-j)! (k+j+1)! } c^{2j-m} p_m(z),
\end{equation}
\begin{equation}
\label{f3_17}
z^{2k-1} = (2k-1)! \sum_{j=1}^k \sum_{m=0}^{2j-1} \frac{ 2j }{ (k-j)! (k+j)! } c^{2j-1-m} p_m(z),\qquad k\in\mathbb{N}.
\end{equation}
By property~1) of Theorem~\ref{t2_2} it follows that
$$ \sigma(u,v) = \sigma(uv,1),\qquad u,v\in\mathbb{P}. $$
We may define the following functional:
\begin{equation}
\label{f3_19}
S(u) = \sigma(u,1),\qquad u\in\mathbb{P}.
\end{equation}
Then
\begin{equation}
\label{f3_22}
S(p_n p_m) = \delta_{n,m},\qquad n,m\in\mathbb{Z}_+.
\end{equation}
Denote
\begin{equation}
\label{f3_24}
s_n := S(\lambda^n),\qquad n\in\mathbb{Z}_+.
\end{equation}
Applying $S$ to the both sides of relations~(\ref{f3_15}),(\ref{f3_17}) and using orthogonality we get
\begin{equation}
\label{f3_26}
s_{2k} = (2k)! \sum_{j=0}^k \frac{ (2j+1) }{ (k-j)! (k+j+1)! } c^{2j},\qquad k\in\mathbb{Z}_+.
\end{equation}
\begin{equation}
\label{f3_28}
s_{2k-1} = (2k-1)! \sum_{j=1}^k \frac{ 2j }{ (k-j)! (k+j)! } c^{2j-1},\qquad k\in\mathbb{N}.
\end{equation}

At first, we shall present a complex-valued weight of orthogonality for~$p_n$.
Denote by $[x]$ the biggest integer, not exceeding a real number $x$.

\begin{proposition}
\label{p3_1}
Let $R_0 = \max(2, 2|c|)$. 
For each $R>R_0$ the following relations hold:
\begin{equation}
\label{f3_30}
\frac{1}{2\pi i} \oint_{|z|=R} p_n(z) p_m(z) w(z) dz = \delta_{n,m},\qquad n,m\in\mathbb{Z}_+,
\end{equation}
where
\begin{equation}
\label{f3_32}
w(z) := \sum_{n=1}^\infty n c^{n-1} z^{-n} \varphi_n(z),
\end{equation}
\begin{equation}
\label{f3_34}
\varphi_n(z) := \sum_{m=0}^\infty \frac{ (2m+n-1)! }{ m! (m+n)! } z^{-2m},\qquad z\in\mathbb{C}:\ |z| > R_0.
\end{equation}
The series in~(\ref{f3_32}),(\ref{f3_34}) converge absolutely and uniformly for $|z|\geq R$.
The function $w(z)$ is analytic in the domain $|z|>R_0$.
Moreover, $\varphi_n(z)$ has the following representation:
\begin{equation}
\label{f3_36}
\varphi_n(z) = a_n
F \left( \frac{n}{2}, \frac{n+1}{2}; n+1; \frac{4}{z^2}
\right),\qquad n\in\mathbb{N},
\end{equation}
where
\begin{equation}
\label{f3_38}
a_n = \frac{ 2^{n-1} }{n!} \left[ \frac{n-1}{2} \right] ! \left( \frac{1}{2} \right)_{ \left[ \frac{n}{2} \right] }.
\end{equation}

\end{proposition}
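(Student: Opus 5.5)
The plan is to realize the moment functional $S$ from (\ref{f3_19}) as a contour integral against $w$. Since $S(p_np_m)=\delta_{n,m}$ by (\ref{f3_22}), relation (\ref{f3_30}) follows once we show
$$ \frac{1}{2\pi i}\oint_{|z|=R} z^k w(z)\,dz = s_k,\qquad k\in\mathbb{Z}_+, $$
with $s_k$ given by (\ref{f3_26}),(\ref{f3_28}). Indeed, $p_np_m$ is a polynomial, so linearity and the finite expansion in monomials then give the claim.

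\textbf{Convergence and analyticity.} The coefficient in (\ref{f3_34}) satisfies
$$\frac{(2m+n-1)!}{m!(m+n)!}\le \binom{2m+n}{m}\le 2^{2m+n}.$$
Hence $|z^{-n}\varphi_n(z)|\le \sum_m 2^{2m+n}|z|^{-2m-n} = (2/|z|)^n (1-4/|z|^2)^{-1}$. Then
$$\sum_n n|c|^{n-1}(2/|z|)^n<\infty \quad\mbox{if } 2|c|<|z|.$$
So for $|z|\ge R>R_0$ the double series is dominated by a convergent geometric-type series. This gives absolute and uniform convergence, and by Weierstrass, analyticity of $w$ on $|z|>R_0$.

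\textbf{Integrating term by term.} Uniform convergence lets us write $w$ as a Laurent series $\sum_{n\ge1}\sum_{m\ge0} n c^{n-1}\frac{(2m+n-1)!}{m!(m+n)!} z^{-n-2m}$. Then $\frac{1}{2\pi i}\oint z^k w\,dz$ picks out the coefficient of $z^{-k-1}$, i.e. the terms with $n+2m=k+1$.
\begin{itemize}
\item For $k=2K$ put $n=2j+1$, $m=K-j$. The coefficient becomes $(2j+1)c^{2j}\frac{(2K)!}{(K-j)!(K+j+1)!}$, which matches (\ref{f3_26}).
\item For $k=2K-1$ put $n=2j$, $m=K-j$. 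This gives $2j\,c^{2j-1}\frac{(2K-1)!}{(K-j)!(K+j)!}$, matching (\ref{f3_28}).
\end{itemize}
This step is pure bookkeeping.

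\textbf{Hypergeometric form (\ref{f3_36}).} We need $\varphi_n(z)=a_n F(n/2,(n+1)/2;n+1;4/z^2)$. Write $x=4/z^2$, so $z^{-2m}=x^m/4^m$. Using the duplication formula,
$$(2m+n-1)! = (n-1)!\,(n)_{2m} = (n-1)!\,4^m (n/2)_m((n+1)/2)_m,$$
and $(m+n)! = n!\,(n+1)_m$. Therefore
$$\varphi_n = \frac{(n-1)!}{n!}\sum_m \frac{(n/2)_m((n+1)/2)_m}{(n+1)_m m!}x^m = \frac1n F\!\left(\frac n2,\frac{n+1}{2};n+1;\frac{4}{z^2}\right).$$

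The main obstacle is reconciling this constant with the stated $a_n$. I would check whether $a_n$ in (\ref{f3_38}) simplifies to $1/n$ by cases on parity. For $n=2q+1$ we have $[\frac{n-1}{2}]!=q!$ and $(1/2)_q=(2q)!/(4^q q!)$. Then
$$a_n=\frac{2^{2q}}{(2q+1)!}\,q!\,\frac{(2q)!}{4^q q!}=\frac{1}{2q+1},$$
which is correct. The even case $n=2q$ should be checked analogously. If it does not reduce to $1/n$, then the intended normalization differs, and I would recompute directly from $\varphi_n(z)$ at $z\to\infty$, where the leading coefficient $\frac{(n-1)!}{n!}=1/n$ fixes the constant.
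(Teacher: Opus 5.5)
Your proof is correct. Its core is exactly the paper's argument: compute $\frac{1}{2\pi i}\oint_{|z|=R} z^k w(z)\,dz$ by term-by-term integration, keep the terms with $n+2m=k+1$, match them with (\ref{f3_26}) and (\ref{f3_28}), then conclude by linearity and (\ref{f3_22}).

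You differ from the paper only in the two auxiliary claims.

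For convergence, the paper proves the sharper estimate $\frac{(2m+n-1)!}{m!(m+n)!}\le \frac{2^n 4^m}{n}$ through a parity case analysis. Your crude bound by $2^{2m+n}$ is enough, because the factor $n$ in $nc^{n-1}$ is absorbed by the geometric decay $(2|c|/R)^n$. It also has a small side benefit: your majorant contains no factor $1/|c|$, whereas the paper's constant $C_1$ does, so there $c=0$ formally needs a separate remark.

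For the hypergeometric form, the paper starts from $a_n F(\cdot)$ with the given $a_n$ and expands separately for $n=2j$ and $n=2j-1$ to recover (\ref{f3_34}). You go the other way, via $(n)_{2m}=4^m(n/2)_m((n+1)/2)_m$. This treats all $n$ at once and shows the constant is simply $1/n$.

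Your only loose end is the even case, and it closes immediately. For $n=2q$ one has $[\frac{n-1}{2}]=q-1$ and $[\frac{n}{2}]=q$, and $(1/2)_q=\frac{(2q)!}{4^q q!}$, so
$$a_{2q}=\frac{2^{2q-1}}{(2q)!}\,(q-1)!\,\frac{(2q)!}{4^q\,q!}=\frac{1}{2q}.$$
Hence $a_n=1/n$ for every $n\in\mathbb{N}$ and the stated normalization is right, so your fallback is not needed. In fact your route exposes a simplification that the paper's formula (\ref{f3_38}) obscures: $\varphi_n(z)=\frac{1}{n}F\left(\frac{n}{2},\frac{n+1}{2};n+1;\frac{4}{z^2}\right)$.
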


\noindent
\textbf{Proof.} Let us define $\varphi_n$ by relation~(\ref{f3_36}) and then check the validity of~(\ref{f3_34}).
Since the hypergeometric function converges in the open unit circle, then $\varphi_n$
is well-defined for for $|z|>2$.
Observe that
$$ a_{2j} = \frac{2^{2j-1}}{(2j)!} (j-1)! (1/2)_j,\quad
a_{2j-1} = \frac{2^{2j-2}}{(2j-1)!} (j-1)! (1/2)_{j-1},\qquad j\in\mathbb{N}. $$
By the definition of the hypergeometric function we may write:
$$ \varphi_{2j}(z) = \frac{2^{2j-1}}{(2j)!} (j-1)! (1/2)_j
\sum_{m=0}^\infty \frac{ (j)_m \left( j+\frac{1}{2} \right)_m }{ (2j+1)_m m! } 4^m z^{-2m} = $$
$$ = \frac{1}{2} \sum_{m=0}^\infty \frac{ (j+m-1)! \left( \frac{1}{2} \right)_{j+m} 4^{j+m} }{ (2j+m)! m! } z^{-2m} = 
\sum_{m=0}^\infty \frac{ (2j+2m-1)! }{ (2j+m)! m! } z^{-2m},\ j\in\mathbb{N}.  $$
This agrees with relation~(\ref{f3_34}) for the case $n=2j$. The case $n=2j-1$, $j\in\mathbb{N}$, can be checked in a similar manner:
$$ \varphi_{2j-1}(z) = \frac{2^{2j-2}}{(2j-1)!} (j-1)! (1/2)_{j-1}
\sum_{m=0}^\infty \frac{ \left( j-\frac{1}{2} \right)_m (j)_m }{ (2j)_m m! } 4^m z^{-2m} = $$
$$ = \sum_{m=0}^\infty \frac{ (j+m-1)! \left( \frac{1}{2} \right)_{j+m-1} 4^{j+m-1} }{ (2j+m-1)! m! } z^{-2m} = 
\sum_{m=0}^\infty \frac{ (2j+2m-2)! }{ (2j+m-1)! m! } z^{-2m}, $$
and this agrees with~(\ref{f3_34}) as well.
By the convergence property of hypergeometric series it follows that $\varphi_n$ converges
absolutely and uniformly for $|z|\geq R$.
If $n=2j$, $j\in\mathbb{N}$, $m\in\mathbb{N}$, then
$$ \frac{ (2m+n-1)! }{ m! (m+n)! } = \frac{ (2m+2j-1)! }{ m! (m+2j)! }
= $$
$$ = \frac{ 1\cdot 3\cdot 5\cdot ... \cdot (2(m+j)-1) }{m!} \cdot
\frac{ 2\cdot 4\cdot 6\cdot ...\cdot 2(m+j-1) }{ (m+2j)!} \leq $$
$$ \leq \frac{ 2^{m+j} (m+j)! }{m!} \cdot \frac{ 2^{m+j-1} (m+j-1)! }{ (m+2j)! } \leq $$
$$ \leq 2^{2m+2j-1} \frac{1}{m+j} \frac{ (m+1)(m+2)...(m+j) }{ (m+1+j)(m+2+j)...(m+2j) } \leq $$ 
\begin{equation}
\label{f3_42}
\leq 2^{2m+2j-1} \frac{1}{m+j} = 2^{n-1} 4^m \frac{2}{2m + 2j} \leq 2^{n} 4^m \frac{1}{n}. 
\end{equation}
If $n=2j+1$, $j\in\mathbb{Z}_+$, $m\in\mathbb{N}$, then
$$ \frac{ (2m+n-1)! }{ m! (m+n)! } = \frac{ (2m+2j)! }{ m! (m+2j+1)! } = $$
$$ = \frac{ 2\cdot 4\cdot 6\cdot ... \cdot 2(m+j) }{m!} \cdot
\frac{ 1\cdot 3\cdot 5\cdot ...\cdot (2(m+j)-1) }{ (m+2j+1)!} \leq $$
$$ \leq \frac{ 2^{m+j} (m+j)! }{m!} \cdot
\frac{ 2^{m+j} (m+j)! }{ (m+2j+1)! }. $$
If $j\geq 1$ then the last expression is equal to
$$ 4^{m+j} \frac{1}{m+2j+1} \frac{ (m+1)(m+2)...(m+j) }{ (m+1+j)(m+2+j)...(m+2j) } \leq $$ 
\begin{equation}
\label{f3_44}
\leq 4^{m+j} \frac{1}{m+2j+1} \leq 4^{m+j} \frac{1}{n} \leq 2^{n} 4^m \frac{1}{n}. 
\end{equation}
If $j=0$ then the above-mentioned expression is equal to $\frac{1}{2} 4^m$, and therefore it is less than or equal to
$2^{n} 4^m \frac{1}{n}$, as well. Finally, if $m=0$ and $n\in\mathbb{N}$, then
$\frac{ (2m+n-1)! }{ m! (m+n)! } =  \frac{1}{n} \leq 2^{n} 4^m \frac{1}{n}$.
We may write
$$ | \varphi_n(z) | \leq  \frac{ 2^{n} }{n}
\sum_{m=0}^\infty 4^m \frac{1}{ |z|^{2m} } = \frac{ 2^{n} }{n} \frac{ |z|^2 }{ |z|^2 - 4 },\qquad  |z|>R_0,\ n\in\mathbb{N}. $$ 
For arbitrary $n\in\mathbb{N}$ we may write:
$$ | n c^{n-1} z^{-n} \varphi_n(z) | \leq 
|c|^{n-1}
\frac{1}{ |z|^n } 2^n \frac{ |z|^2 }{ (|z|^2 - 4) }
\leq 
\frac{ |z|^2 }{ |c| (|z|^2 - 4) } 
\left(
\frac{ 2 |c| }{ |z| }
\right)^n \leq $$
$$ \leq
\frac{ |z|^2 }{ |c| (|z|^2 - 4) } 
(R_0/R)^n \leq C_1 (R_0/R)^n,\qquad   C_1>0,\ |z|\geq R. $$
Thus, $w(z)$ converges absolutely and uniformly for $|z|\geq R$.
By the Weierstrass theorem, $w(z)$ is an analytic function in the domain $|z|>R_0$.

Let us define the following functional:
\begin{equation}
\label{f3_44_a}
\widehat S(u) := \frac{1}{2\pi i} \oint_{|z|=R} u(z) w(z) dz,\qquad u\in\mathbb{P}. 
\end{equation}
Our aim is to check that $\widehat S = S$.
For an arbitrary $d\in\mathbb{Z}$ we may write:
$$ \frac{1}{2\pi i} \oint_{|z|=R} z^d w(z) dz =
\frac{1}{2\pi i} \oint_{|z|=R}  
\sum_{n=1}^\infty n c^{n-1} z^{d-n} \varphi_n(z) dz = $$
$$ = \sum_{n=1}^\infty n c^{n-1} \frac{1}{2\pi i} \oint_{|z|=R} z^{d-n} \varphi_n(z) dz = $$
$$ = \sum_{n=1}^\infty n c^{n-1} \frac{1}{2\pi i} \oint_{|z|=R}  
\sum_{m=0}^\infty \frac{ (2m+n-1)! }{ m! (m+n)! } z^{d-n-2m}
dz = $$
$$ = \sum_{n=1}^\infty \sum_{m=0}^\infty n c^{n-1} 
\frac{ (2m+n-1)! }{ m! (m+n)! }
\frac{1}{2\pi i} \oint_{|z|=R}  
z^{d-n-2m}
dz = $$
\begin{equation}
\label{f3_44_1}
= 
\sum_{n=1}^\infty \sum_{m=0}^\infty
n c^{n-1} 
\frac{ (2m+n-1)! }{ m! (m+n)! }
\delta_{2m,d+1-n}, 
\end{equation}
where we have used the uniform convergence of the series to change the order of summation and integration.
In particular, if $d=2k$. $k\in\mathbb{Z}_+$ we have:
$$ \widehat S(z^{2k}) = 
\sum_{n=1}^\infty \sum_{m=0}^\infty
n c^{n-1} 
\frac{ (2m+n-1)! }{ m! (m+n)! }
\delta_{2m,2k+1-n}. $$
Notice that there only remain non-zero summands for $n=2j+1$, $j=0,1,...,k$, and $m=k-j$. Then
$$ \widehat S(z^{2k}) = 
\sum_{j=0}^k 
(2j+1) c^{2j} 
\frac{ (2k)! }{ (k-j)! (k+j+1)! } = s_{2k}. $$
In a similar manner, for an arbitrary $k\in\mathbb{N}$ we obtain that
$\widehat S(z^{2k-1}) = s_{2k-1}$. By the linearity we get $\widehat S = S$. 
Relation~(\ref{f3_30}) now follows from relation~(\ref{f3_22}). $\Box$

We shall now construct a nonnegative weight function for the orthogonality relations of $p_n$.
By Theorem~\ref{t2_3} there exists a positive (scalar) measure $M(\delta)$, $\delta\in\mathfrak{B}(\mathbb{C})$, 
supported on a circle $\{ z\in\mathbb{C}\ \arrowvert \ |z| = \alpha \}$, $\alpha>0$, such that
the following relation holds:
\begin{equation}
\label{f3_45}
\oint_{|z|=\alpha} p_n(z) p_m(z) dM(z)
= \delta_{n,m},\qquad n,m\in\mathbb{Z}_+.
\end{equation}
Moreover, the measure $M$ was obtained as a solution of the moment problem~(\ref{f2_5}), by application of Theorem~\ref{t2_1}
(see relation~(\ref{f2_79})).
From the proof of Theorem~\ref{t2_1} it is clear that the radius $\alpha$ can be chosen arbitrarily large.
We choose $\alpha> R_0$, $R_0 = \max(2, 2|c|)$.
We shall use relation~(\ref{f3_30}) with $R=\alpha$:
\begin{equation}
\label{f3_48}
\frac{1}{2\pi i} \oint_{|z|= \alpha} p_n(z) p_m(z) w(z) dz = \delta_{n,m},\qquad n,m\in\mathbb{Z}_+.
\end{equation}
Observe that an addition to $w(z)$ of any analytic function in a circle $|z|< \beta$, with $\beta > \alpha$,
does not change the value of the latter integral.
Define the following function
\begin{equation}
\label{f3_50}
g(z) := \frac{1}{R^2} \sum_{j=0}^\infty \overline{ s_{j+1} } \left( 
\frac{z}{R^2} \right)^j,
\end{equation}
for those $z$, where the series converges.

\begin{lemma}
\label{l3_1}
The function $g(z)$ is well-defined and analytic in $\mathbb{D}_{R_1}$, 
$R_1 := \frac{ R^2 }{ R_0 }$ ($> R$).
It has the following representation:
\begin{equation}
\label{f3_52}
g(z) = \frac{1}{ R^2 } \widehat w\left( \frac{z}{R^2} \right),\qquad |z| < \frac{ R^2 }{ R_0 }, 
\end{equation}
where
\begin{equation}
\label{f3_54}
\widehat w(u) := \frac{ 1 }{ u^2 } \overline{w}\left( \frac{1}{u} \right) - \frac{1}{u},\qquad     |u|<\frac{1}{ R_0 },
\end{equation}
and $\overline{w}(\cdot) := \overline{ w(\overline{\cdot}) }$.

\end{lemma}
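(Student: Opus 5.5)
The plan is to identify the Laurent coefficients of $w$ at infinity with the moments $s_j$, and then read off the power series of $\widehat w$.

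First I expand $w$ into a Laurent series in $|z|>R_0$. By Proposition~\ref{p3_1}, the double series
$$w(z)=\sum_{n\ge 1}\sum_{m\ge 0} n c^{n-1}\frac{(2m+n-1)!}{m!(m+n)!} z^{-n-2m}$$
converges absolutely for $|z|>R_0$. Indeed, the bound $\frac{(2m+n-1)!}{m!(m+n)!}\le 2^n4^m/n$ from the proof gives a majorant $\sum_{n,m}|c|^{n-1}2^n4^m|z|^{-n-2m}$, which is finite when $|z|>\max(2,2|c|)$. So I may regroup by powers $z^{-(d+1)}$ and write $w(z)=\sum_{d\ge 0} t_d z^{-d-1}$. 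By formula~(\ref{f3_44_1}), the coefficient $t_d$ equals $\frac{1}{2\pi i}\oint_{|z|=R} z^d w(z)\,dz$. The proof of Proposition~\ref{p3_1} shows this equals $s_d$, and $s_0=1$ (the term $n=1$, $m=0$). Alternatively, I can match coefficients directly: the pairs with $n+2m=d+1$ give exactly (\ref{f3_26}) and (\ref{f3_28}). Hence
$$w(z)=\sum_{d\ge0} s_d z^{-d-1},\qquad |z|>R_0,$$
with absolute convergence.

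Next I compute $\widehat w$. For $|u|<1/R_0$ we have $|1/\overline u|>R_0$, so
$$\overline{w}(1/u)=\overline{w(1/\overline u)}=\sum_d \overline{s_d}\,u^{d+1}.$$
Therefore
$$\widehat w(u)=u^{-2}\sum_{d\ge0}\overline{s_d}\,u^{d+1}-u^{-1}=\sum_{j\ge0}\overline{s_{j+1}}\,u^j,$$
using $\overline{s_0}=1$. This is an analytic function on $|u|<1/R_0$, with radius of convergence at least $1/R_0$. Substituting $u=z/R^2$ gives convergence of the series (\ref{f3_50}) for $|z|<R^2/R_0$ and yields (\ref{f3_52}).

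For the closed disc $\mathbb{D}_{R_1}$, I read the statement as analyticity on the open disc of radius $R^2/R_0$, since (\ref{f3_52}) is stated there. If closed-disc convergence is really needed, I would instead use the cruder bound $|s_d|\le C_2 R_0'^{\,d}$ for any $R_0'>R_0$, which follows from the majorant above. Note that $R_1>R$ holds because $R>R_0$.

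The main obstacle is justifying the regrouping of the double series into a single Laurent series, and checking that the constant $s_0=1$ cancels the $-1/u$ term. Both are handled by the absolute-convergence estimate already established in the proof of Proposition~\ref{p3_1}.
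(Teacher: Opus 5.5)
Your argument is correct and is essentially the paper's proof: both establish $w(z)=\sum_{d\ge 0}s_d z^{-d-1}$ for $|z|>R_0$, then use $s_0=1$ to cancel the $-1/u$ term and substitute $u=z/R^2$. The only difference is that you get this expansion by regrouping the absolutely convergent double series, whereas the paper invokes Laurent's theorem and reads off the coefficients from (\ref{f3_44_1}).

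Reading $\mathbb{D}_{R_1}$ as the open disc is exactly what the paper's proof delivers, and it is the right reading. For $c=1$ the $s_d$ are central binomial coefficients, of order $2^d d^{-1/2}$, so the series (\ref{f3_50}) diverges at $z=R^2/2=R_1$. Drop your fallback: a bound $|s_d|\le C_2R_0'^{\,d}$ with $R_0'>R_0$ only gives convergence for $|z|<R^2/R_0'$, a smaller disc.
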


\noindent
\textbf{Proof.} Since $w(z)$ is analytic in $R_0 < |z| < \infty$, then it has a representation by the Laurent series in this domain:
\begin{equation}
\label{f3_56}
w(z) = \sum_{n=-\infty}^{+\infty} c_n z^n,
\end{equation}
where
\begin{equation}
\label{f3_58}
c_n = \frac{1}{2\pi i } \oint_{ |z|=\alpha } z^{-n-1} w(z) dz.
\end{equation}
By relation~(\ref{f3_44_1}) we conclude that $c_n$ with nonnegative indices are zeros. By considerations after~(\ref{f3_44_1})
we see that $c_n = s_{-n-1}$, for negative $n$. Thus, we have the following expansion of $w(z)$:
\begin{equation}
\label{f3_60}
w(z) = \sum_{n=-\infty}^{-1} s_{-n-1} z^n,\qquad R_0 < |z| < \infty.
\end{equation}
Then
\begin{equation}
\label{f3_62}
\overline{w}(z) = \sum_{n=-\infty}^{-1} \overline{ s_{-n-1} } z^n,\qquad R_0 < |z| < \infty,
\end{equation}
and
\begin{equation}
\label{f3_64}
\overline{w}(1/u) = \sum_{n=-\infty}^{-1} \overline{ s_{-n-1} } u^{-n} = 
\sum_{k=0}^\infty \overline{ s_k } u^{k+1},\qquad  |u| < \frac{1}{ R_0 }.
\end{equation}
Therefore
\begin{equation}
\label{f3_68}
\frac{ 1 }{ u^2 } \overline{w}\left( \frac{1}{u} \right) - \frac{1}{u} =
\sum_{k=1}^\infty \overline{ s_k } u^{k-1} 
= \sum_{j=0}^\infty \overline{ s_{j+1} } u^j,\qquad  |u| < \frac{1}{ R_0 }.
\end{equation}
Thus, the function $\widehat w(u)$ in~(\ref{f3_54}) is well-defined. We may write:
\begin{equation}
\label{f3_70}
\frac{1}{ R^2 } \widehat w\left( \frac{z}{R^2} \right) =
\frac{1}{R^2} \sum_{j=0}^\infty \overline{ s_{j+1} } \left( 
\frac{z}{R^2} \right)^j,\qquad |z| < \frac{ R^2 }{ R_0 }.  
\end{equation}
Comparing relation~(\ref{f3_70}) with relations (\ref{f3_50}),(\ref{f3_52})
we conclude that the function $g(z)$ is well-defined in $\mathbb{D}_{R_1}$ and
representation~(\ref{f3_52}) is valid.
$\Box$

Denote
\begin{equation}
\label{f3_72}
\mu_n = \oint_{|z|=\alpha} z^n dM(z),\quad
\tau_n = \frac{ 1 }{ 2\pi i} \oint_{|z|=\alpha} z^n (w(z) + g(z)) dz,\qquad n\in\mathbb{Z}.
\end{equation}
By (\ref{f3_45}),(\ref{f3_48}) we see that for $m\in\mathbb{Z}_+$
$$ \oint_{|z|=\alpha} p_m(z) dM(z) = \frac{ 1 }{ 2\pi i} \oint_{|z|=\alpha} p_m(z) (w(z) + g(z)) dz. $$
Since $z^n$, $n\in\mathbb{Z}_+$, can be represented as a linear combination of $p_m$s, then $\mu_n = \tau_n$, for
$n\in\mathbb{Z}_+$. For $n<0$ we may write:
$$ \overline{ \mu_n } = \oint_{|z|=\alpha} \overline{z}^n dM(z) = R^{2n} \oint_{|z|=\alpha} z^{-n} dM(z) =
R^{2n} \mu_{-n} = R^{2n} s_{-n}. $$
Therefore
\begin{equation}
\label{f3_74}
\mu_n = R^{2n} \overline{ s_{-n} },\qquad n=-1,-2,....
\end{equation}
By~(\ref{f3_44_1}),(\ref{f3_50}), for $n<0$ we have:
\begin{equation}
\label{f3_76}
\tau_n = \frac{ 1 }{ 2\pi i} \oint_{|z|=\alpha} z^n (w(z) + g(z)) dz = 
\frac{ 1 }{ 2\pi i} \oint_{|z|=\alpha} z^n g(z) dz
= R^{2n} \overline{ s_{-n} }.
\end{equation}
By~(\ref{f3_74}),(\ref{f3_76}) we conclude that $\mu_n = \tau_n$, for all $n\in\mathbb{Z}$.
Denote
\begin{equation}
\label{f3_77}
m(\theta) := M( [\alpha,\alpha e^{i\theta}) ),
\end{equation}
where by $[\alpha,\alpha e^{i\theta})$ one means an arc of the circle $|z|=\alpha$, with arguments from $0$ to $\theta$.
Then
\begin{equation}
\label{f3_78}
\int_0^{2\pi} e^{in\theta} dm(\theta) = \frac{1}{2\pi} \int_0^{2\pi} e^{in\theta} 
( w(\alpha e^{ i \theta }) + g(\alpha e^{ i \theta }) ) \alpha e^{ i \theta } d\theta,\qquad n\in\mathbb{Z}.
\end{equation}
Set
\begin{equation}
\label{f3_80}
b(\theta) := \frac{1}{2\pi} \int_0^\theta ( w(\alpha e^{ix}) + g(\alpha e^{ix}) ) \alpha e^{ix} dx.
\end{equation}
Integrating by parts we obtain that
\begin{equation}
\label{f3_82}
\int_0^{2\pi} e^{in\theta} m(\theta) d\theta = \int_0^{2\pi} e^{in\theta} b(\theta) d\theta,\qquad n\in\mathbb{Z}\backslash\{ 0 \}.
\end{equation}
Then
\begin{equation}
\label{f3_84}
m(\theta) - b(\theta) = C = \mbox{const},\qquad \mbox{a.e. on $[0,2\pi]$}.
\end{equation}
By the left continuity equality in~(\ref{f3_84}) holds for all $\theta\in (0,2\pi]$.
Therefore $b(\theta)$ is a non-decreasing function on the interval $(0,2\pi]$.
By~(\ref{f3_80}) we see that
\begin{equation}
\label{f3_86}
p(\theta) := \frac{1}{2\pi} ( w(\alpha e^{i\theta}) + g(\alpha e^{i\theta}) ) \alpha e^{i\theta} \geq 0,\qquad \theta\in (0,2\pi).
\end{equation}
By~(\ref{f3_48}) we may write:
\begin{equation}
\label{f3_88}
\int_0^{2\pi} p_n(\alpha e^{i\theta}) p_m(\alpha e^{i\theta}) p(\theta) d\theta = \delta_{n,m},\qquad n,m\in\mathbb{Z}_+.
\end{equation}

\begin{theorem}
\label{t3_1}
There exists $\alpha> R_0$, $R_0 = \max(2, 2|c|)$, such that orthogonality relations~(\ref{f3_88}) hold
with a nonnegative continuous weight $p(\theta)$ from~(\ref{f3_86}).
 
\end{theorem}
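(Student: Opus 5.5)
The plan is to assemble the theorem from the chain of considerations preceding it, checking carefully the one step that is easy to overlook: identifying the auxiliary radius $R$ with the radius $\alpha$ of the circle carrying $M$. First, apply Theorem~\ref{t2_3} with $N=1$ to the complex symmetric matrix $\mathbf{J}_c$. Its entries are bounded and its off-diagonal entries equal $1$, so conditions~(\ref{f1_7})-(\ref{f1_11}) hold. This gives a positive scalar measure $M$ on a circle $|z|=\alpha$ with~(\ref{f3_45}). From the proof of Theorem~\ref{t2_1}, $\alpha=\xi\widehat\rho$, where $\xi>R$ is arbitrary and $\widehat\rho>\|B\|$ is arbitrary. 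Hence we may take $\alpha>R_0$, and we fix $R=\alpha$ in Proposition~\ref{p3_1} and Lemma~\ref{l3_1}. With this choice the conjugation identity $\overline z=\alpha^2/z$ on $|z|=\alpha$ matches the scaling $R^2$ used in $g$. Lemma~\ref{l3_1} shows that $g$ is analytic in $\mathbb{D}_{R_1}$ with $R_1=\alpha^2/R_0>\alpha$. Therefore adding $g$ to $w$ does not change the integrals in~(\ref{f3_48}), by Cauchy's theorem applied to $p_np_m g$.

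Second, compare all trigonometric moments of $M$ and of the complex density $(w+g)\,dz/(2\pi i)$ on $|z|=\alpha$.
\begin{itemize}
\item For $n\ge0$, expand $z^n$ in the $p_m$ and use~(\ref{f3_45}),(\ref{f3_48}) with $p_0=1$. Both sides then give $S(z^n)=s_n$.
\item For $n<0$, use $\overline z=\alpha^2/z$ and the positivity, hence realness, of $M$ to get~(\ref{f3_74}): $\mu_n=\alpha^{2n}\overline{s_{-n}}$. Compute $\tau_n$ from~(\ref{f3_44_1}), which shows that $z^nw$ integrates to zero for $n<0$ because only $d\ge0$ survives. Then compute $\frac1{2\pi i}\oint z^n g\,dz$ by picking the coefficient of $z^{-n-1}$ in~(\ref{f3_50}), giving $\alpha^{-2}\overline{s_{-n}}\,\alpha^{-2(-n-1)}=\alpha^{2n}\overline{s_{-n}}$.
\end{itemize}
So $\mu_n=\tau_n$ for all $n\in\mathbb{Z}$.

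Third, pass from equal moments to equal measures. The function $p(\theta)$ in~(\ref{f3_86}) is continuous, since $w$ and $g$ are analytic near $|z|=\alpha$. Let $m(\theta)$ be the distribution function~(\ref{f3_77}) and $b(\theta)$ the primitive~(\ref{f3_80}). Equality of all Fourier coefficients of $dm$ and $p\,d\theta$ gives, after integration by parts, equality of the Fourier coefficients of $m$ and $b$ for $n\ne0$, as in~(\ref{f3_82}). Hence $m-b$ is constant a.e. (This is just uniqueness of a measure on $\mathbb{T}$ with given Fourier coefficients.) Since $b$ is continuous and $m$ is left continuous and monotone, the equality holds everywhere on $(0,2\pi]$. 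So $b$ is nondecreasing, and its continuous derivative satisfies $p(\theta)\ge0$; in particular $p$ is real. Finally, substitute $z=\alpha e^{i\theta}$ in~(\ref{f3_48}), noting $dz=i\alpha e^{i\theta}d\theta$, to obtain~(\ref{f3_88}).

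The main obstacle is justifying that $\alpha$ can be made larger than $R_0$ within Theorem~\ref{t2_3}, together with the bookkeeping $R=\alpha$. The uniform convergence needed to integrate $w$ termwise on $|z|=\alpha$ is already supplied by Proposition~\ref{p3_1}. I would state explicitly that in the proof of Theorem~\ref{t2_1} both $\xi$ and $\rho$ are free above fixed thresholds, so $\alpha=\xi\widehat\rho$ can be taken arbitrarily large.
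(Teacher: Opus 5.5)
Your proposal is correct and follows essentially the same route as the paper. You take $\alpha=R>R_0$, using the freedom in $\widehat\rho$ (and $\xi$) from the proof of Theorem~\ref{t2_1}. You show $\mu_n=\tau_n$ for all $n\in\mathbb{Z}$ via $\overline z=\alpha^2/z$ and the analyticity of $g$ in $|z|<\alpha^2/R_0$. You then identify $M$ with $p(\theta)\,d\theta$ through the distribution functions $m$ and $b$, so that $p\ge 0$, and (\ref{f3_88}) follows from (\ref{f3_48}) with $w$ replaced by $w+g$. Your explicit checks (the coefficient computation for $g$, the remark that $\alpha$ can be enlarged because $\widehat\rho>\|B\|$ is free, and that adding $g$ is harmless by Cauchy's theorem) spell out steps the paper leaves to ``the preceding considerations.''
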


\noindent
\textbf{Proof.} The statement of the theorem follows from the preceding considerations. $\Box$

\noindent
Address:

Kharkiv, Ukraine

(In 1995-2025 worked at V.N. Karazin Kharkiv National University)

Sergey.M.Zagorodnyuk@gmail.com

}

\end{document}